\newtheorem{Theorem}{Theorem}[section]
\newtheorem{theorem}[Theorem]{Theorem}
\newtheorem{definition}[Theorem]{Definition}
\newtheorem{remark}[Theorem]{Remark}
\newtheorem{example}[Theorem]{Example}
\newtheorem{proposition}[Theorem]{Proposition}
\title{Representations of weighted Rota-Baxter Jacobi-Jordan algebras}
\author[J. Anitch\'eou, S. Attan ]
{Jules Anitch\'eou and Sylvain Attan 
} 
\address{Jules Anitch\'eou \newline
	Institut de Math\'ematiques et de Sciences Physiques, Universit\'{e} d'Abomey-Calavi
	01 BP 613-Oganla, Porto-Novo, B\'{e}nin}
\email{jules.anitcheou@imsp-uac.org}
\address{Sylvain Attan \newline
	D\'{e}partement de Math\'{e}matiques, Universit\'{e} d'Abomey-Calavi
	01 BP 4521, Cotonou 01, B\'{e}nin}
\email{syltane2010@yahoo.fr}
\begin{document}
	\maketitle
	\begin{abstract} 
		Weighted Rota-Baxter Jacobi-Jordan algebras and their representations are studied. Moreover, we consider weighted Rota-Baxter paired operators  that are related to weighted Rota-Baxter Jacobi-Jordan algebras together with their representations. Finally, we define suitable  cohomology for weighted Rota-Baxter Jacobi-Jordan algebras in low  degrees.	
	\end{abstract}
	{\bf Keywords:} Jacobi-Jordan algebras, Rota-Baxter operators, representations, Rota-Baxter paired operators, cohomology.
	
	{\bf 2020 MSC:}  17C50,16W10,17B56.
		
	\section{Introduction}
	A while ago, a new class of algebras appeared in the literature: the so Jacobi-Jordan algebras. These are  commutative non-associative algebras  satisfying the Jacobi identity. It turns out that these algebras seen as strange cousins of Lie algebras, are a special class of Jordan
	nilalgebras and are also related to Bernstein-Jordan algebras.They were  first appeared in \cite{Momo6} and since then a lot of works are done on this subject, see for example \cite{Momo4,Momo5,pzu}.

	A Rota-Baxter operator of weight $0$ on an algebra $A$ is a linear map $\mathcal{I}: A\rightarrow A$ satisfying the so-called Rota-Baxter identity: $\mathcal{I}(x)\mathcal{I}(y)=\mathcal{I}(\mathcal{I}(x)y+x\mathcal{I}(y)),$ for all $x,y\in A$. Integration by parts is an example of a Rota-Baxter operator of weight $0$. Thus, These operators can be regarded as an algebraic abstraction
	representing the integral operator. A Rota-Baxter algebra is an associative algebra equipped with a Rota-Baxter operator of weight $0$.  These operators  first introduced by Baxter in his study of the fluctuation theory \cite{Toto4}, were further developed by Rota \cite{Toto23} where some identities as well as some applications of Baxter algebra arising in probability and combinatorial theory were given. An important fact is that, Rota-Baxter operators have a connection with combinatorics of shuffle algebras \cite{gk}, Yang-Baxter equations \cite{Toto34}, dendriform algebras \cite{Toto34, Toto11}, renormalizations in quantum field theory \cite{Toto6}, multiple
zeta values in number theory \cite{Toto14} and splitting of algebraic operads \cite{bbgn}. The reader can see  studies of Cartier \cite{Toto33} and Atkinson \cite{fva} for further properties of Rota-Baxter operators as well as those of \cite{bak} where these operators on Lie algebras were first considered by Kuperscmidt in the study of classical $r$-matrices.
More generally, weighted Rota-Baxter operators (also called Rota-Baxter operators with arbitrary weight)  related with tridendriform algebras \cite{Toto10}, post-Lie algebras and modified Yang-Baxter equations \cite{Toto2}, weighted infinitesimal bialgebras and weighted Yang-Baxter equations \cite{Toto31},  were studied in \cite{Toto2, Toto3}. Recently, the cohomology of Rota-Baxter operators of weight $1$ on Lie algebras and Lie groups is studied \cite{Toto16}. Furthermore, Das \cite{Toto7} developed a similar theory for Rota-Baxter associative algebras of weight zero. Next, Wang and Zhou \cite{Toto28}, Das \cite{Toto8} studied Rota-Baxter algebras of arbitrary weight by different methods respectively. Precisely in \cite{Toto8}, the authors considered weighted Rota-Baxter associative algebras and define cohomology of them with coefficients in a suitable Rota-Baxter bimodule. When considering the cohomology with coefficients in itself, it governs the simultaneous deformations of algebras and weighted Rota-Baxter operators. Later, Das \cite{das} studied cohomology of weighted Rota-Baxter Lie
algebras and Rota-Baxter paired operators using the method of \cite{Toto28}. This may be a transformation from weight zero to arbitrary weight, and the successful transformation is entirely due to
Rota-Baxter Lie groups \cite{gls}.  Up to now, there was very few study about Rota-Baxter operators  on Jacobi-Jordan algebras. Thus it is time to study these operators on Jacobi-Jordan algebras. In a concrete way, we study a representation theory and cohomology theory in low degrees  of Rota-Baxter Jacobi-Jordan algebras of arbitrary weight. To do that, we will apply the approach of \cite{das} to weighted Rota-Baxter Jacobi-Jordan algebras. More precisely, we first consider representations of weighted Rota-Baxter Jacobi-Jordan algebras and provide various constructions of them. Next, we define the cohomology of a weighted Rota-Baxter Jacobi-Jordan algebra with coefficients in a representation. This cohomology as some existing works on the theme, is obtained as a byproduct of the standard zigzag cohomology of the underlying Jacobi-Jordan algebra and the cohomology of the underlying weighted Rota-Baxter operator.

The paper is organized as follows. In Section \ref{s2} we consider weighted Rota-Baxter Jacobi-Jordan algebras and their representations. We also give several new constructions of representations on these algebraic structures. In section \ref{s3}, given a Jacobi-Jordan algebra and a representation on it, we introduce weighted Rota-Baxter paired operators that are related to weighted Rota-Baxter Jacobi-Jordan algebras together with their representations and we give a characterization of this concept. Observe that the terminology of Rota-Baxter paired operators   appeared first in \cite{Toto32} where the authors introduced Rota-Baxter paired modules in the associative context and was taken up in \cite{das} in the case of Lie algebras. In section \ref{s4}, we defined the cohomology in low degrees of a weighted Rota-Baxter Jacobi-Jordan algebra with coefficients in its representation and we compute some cohomology spaces as applications.

Throughout this paper, let $\mathbb{K}$ be a field of characteristic $0.$ Except specially stated, vector spaces are $\mathbb{K}$-vector spaces and all tensor products are taken over $\mathbb{K}$.
	
	\section{Representatons of weighted Rota-Baxter Jacobi-Jordan algebras}\label{s2}
	In this section, we consider a weighted Rota-Baxter Jacobi-Jordan and introduce their representations. We also provide various examples and new constructions. Let $\lambda \in \mathbb{K}$ be a fixed scalar unless specified otherwise.
		\begin{definition}
		 A $\lambda$- weighted Rota-Baxter operator on an algebra $(A,\ast)$ is a linear map $\mathcal{I}:A \longrightarrow A$ satisfying
			\begin{eqnarray}
				\mathcal{I}(x)\ast\mathcal{I}(y)=\mathcal{I}(\mathcal{I}(x)\ast y+x\ast \mathcal{I}(y)+\lambda (x\ast y)), \forall x,y \in A. \label{ro}
			\end{eqnarray}	
	\end{definition}
\begin{example}
	Let' s take $A=C(\mathbb{R})$, the algebra of continuous real-valued functions on $\mathbb{R}$ and define $\mathcal{I}: A\rightarrow A$ by 
	$$\mathcal{I}(f)(x):=\int_{0}^{x}f(t)dt.$$
	 Then $\mathcal{I}$ is a $0$- weighted Rota-Baxter operator on the algebra $A$.
\end{example}
	\begin{definition} 
		\begin{enumerate}
		\item[(i)] A Jacobi-Jordan algebra is an algebra $(A,\ast)$ satisfying
		\begin{eqnarray}
			&& x\ast y=y\ast x, \forall x,y\in A,\nonumber\\
			&&(x\ast y)\ast z+ (y\ast z)\ast x+(z\ast x)\ast y; \forall x,y,z\in A.\label{jji}
		\end{eqnarray}
	\item [(ii)] A $\lambda$- weighted Rota-Baxter Jacobi-Jordan algebra is a pair $(A,\mathcal{I})$ consisting of a  Jacobi-Jordan algebra $A$ together with a $\lambda$- weighted Rota-Baxter operator on it.
		\item [(iii)]	
	Let $(A,\mathcal{I})$ and $(A',\mathcal{I}')$ be two $\lambda$-weighted Rota-Baxter Jacobi-Jordan algebras. A morphism from $(A,\mathcal{I})$ to $(A',\mathcal{I}')$ is  algebras morphism $\phi:A\rightarrow A'$ satisfying  $\phi\circ\mathcal{I}=\mathcal{I}'\circ\phi$. It is called an isomorphism if $\phi$ is so.
\end{enumerate}	
	\end{definition}
	\begin{example} $\label{E}$
		\begin{enumerate} 
			\item [(i)] Let $A$ be a Jacobi-Jordan algebra then, the pair $(A,id_{A})$ is a $(-1)$-weighted Rota-Baxter Jacobi-Jordan algebra.
			\item [(ii)] Let $(A,\mathcal{I})$ be a $\lambda$-weighted Rota-Baxter Jacobi-Jordan algebra. Then for all $\mu\in\mathbb{K}$, the pair $(A,\mu\mathcal{I})$ is a $\mu\lambda$-weighted Rota-Baxter algebra. 
			\item [(iii)] Let $(A,\mathcal{I})$  be  a $\lambda$-weighted Rota-Baxter Jacobi-Jordan algebra  and  $\psi\in Aut(A)$ be an automorphism  of the Jacobi-Jordan algebra $A$. Thn, the pair  $(A,\psi^{-1}\circ\mathcal{I}\circ \psi)$ is a $\lambda$-weighted Rota-Baxter Jacobi-Jordan algebra.
			\item [(iv)] Let $(A,\mathcal{I})$ be a $\lambda$- weighted Rota-Baxter Jacobi-Jordan algebra. Then the pair $(A,-\lambda id_{A}-\mathcal{I})$ is also a $\lambda$- weighted Rota-Baxter Jacobi-Jordan algebra.
	\end{enumerate}
\end{example}
\begin{example}\label{F}
	Let $(A,\ast)$ be the two-dimensional Jacobi-Jordan algebra where the only nonzero product in a given basis $\{e_{1}, e_{2}\}$  is $e_{1}\ast e_{1}=e_{2}$ (see \cite{Momo6}).
	The linear map $\mathcal{I}: A\rightarrow A$ defined by 
	$\mathcal{I}(e_1)=a_1e_1+a_2e_2,\ \mathcal{I}(e_2)=b_1e_1+b_2e_2$, is a 
	Rota-Baxter operator of weight $\lambda\in\mathbb{R}$ on $(A,\ast)$ if and only if
	\begin{eqnarray}
		\mathcal{I}(e_i)\ast \mathcal{I}(e_j)=
		\mathcal{I}(\mathcal{I}(e_i)\ast e_j+e_i\ast \mathcal{I}(e_j)+\lambda e_i\ast e_j),   \mbox{ $  \forall\,\ 1\leq i,j\leq 2.$} \label{eq1}
	\end{eqnarray}
The equation (\ref{eq1}) holds if and only if
	$$(2a_1+\lambda)b_1=a_1^2-(2a_1+\lambda)b_2=a_1b_1-b_1b_2=b_1=0.$$
	\begin{itemize}
	\item [$\diamond$] If $\lambda=0$, then the pair $(A,\mathcal{I})$ is a $0$-weighted Rota-Baxter Jacobi-Jordan algebra where 
	\begin{eqnarray}
		\mathcal{I}\in\left\{\begin{pmatrix}
			0&0	\\
			a_2&b_2	
		\end{pmatrix} \mbox{, $ a_2, b_2  \in\mathbb{R}$ } \right\}
		\bigcup
		\left\{\begin{pmatrix}
			2a_1&0	\\
			a_2&a_1		
		\end{pmatrix} \mbox{, $ a_1, a_2  \in\mathbb{R}, a_1\neq 0$ } \right\}.\nonumber				
	\end{eqnarray}	
\item[$\diamond$] If $\lambda\neq 0$ and $a\neq \frac{\lambda}{2}$, then the pair $(A,\mathcal{I})$ is  a $\lambda$-weighted Rota-Baxter Jacobi-Jordan algebra where 
\begin{eqnarray}
	\mathcal{I}\in\left\{\begin{pmatrix}
		a_1&0	\\
		a_2&\frac{a_1^2}{2a_1+\lambda}	
	\end{pmatrix} \mbox{, $ a_1, a_2  \in\mathbb{R},\ a\neq \frac{\lambda}{2}$ } \right\}.			
\end{eqnarray}	
	\end{itemize}
\end{example}
\begin{example}\label{G}
Let $A:=Span\{e_{1}, e_{2}, e_{3}, e_{4}\}$ be the $4$-dimensional Jacobi-Jordan algebra over $\mathbb{R}$ defined by: $e_{1}\ast e_{1}=e_{2}$ (see \cite{Momo4}). Then the linear map $\mathcal{I}:A\rightarrow A$ given in the basis $\{e_{1}, e_{2}, e_{3}, e_{4}\}$ by 
\begin{eqnarray}
	\mathcal{I}=		
	\begin{pmatrix}
		a_{1}&b_{1}&c_{1}&d_{1}\\
		a_{2}&b_{2}&c_{2}&d_{2}\\
		a_{3}&b_{3}&c_{3}&d_{3}\\
		a_{4}&b_{4}&c_{4}&d_{4}			
	\end{pmatrix}
	, a_{i},b_{i},c_{i},d_{i}\in \mathbb{R}	\nonumber
\end{eqnarray}
is a $\lambda$-weighted Rota-Baxter operator on 
$(A,\ast)$ if and only if
\begin{eqnarray}
	\mathcal{I}(e_{i}) \ast \mathcal{I}(e_{j})=\mathcal{I}\Big(\mathcal{I}(e_{i}) \ast e_{j} + e_{i} \ast \mathcal{I}(e_{j})+\lambda (e_{i}\ast e_{j})\Big),  \forall  1 \leqslant i \leqslant j \leqslant 4,	\nonumber
\end{eqnarray}
that is  
\begin{equation}
	\left\{\begin{array}{ll}
		b_{1}=c_{1}=d_{1}=0, \\
		a^{2}_{1}=(2a_{1}+\lambda)b_{2}\\
		(2a_{1}+\lambda)(b_{3}-b_{4})=0.	
	\end{array} \right.\nonumber
\end{equation}
\begin{enumerate}
\item [$\diamond$] If $\lambda=0$, we get a
$0$-weighted Rota-Baxter Jacobi-Jordan algebra
$(A, \mathcal{I})$ where
 \begin{eqnarray}
	\mathcal{I}\in\left\{	
	\begin{pmatrix}
		0&0&0&0\\
		a_{2}&b_{2}&c_{2}&d_{2}\\
		a_{3}&b_{3}&c_{3}&d_{3}\\
		a_{4}&b_{4}&c_{4}&d_{4}			
	\end{pmatrix}
	, a_{i},b_{i},c_{i},d_{i}\in \mathbb{R}	\right\}
	\bigcup\left\{	
	\begin{pmatrix}
		a_{1}&0&0&0\\
		a_{2}&\frac{a^{2}_{1}}{2}&c_{2}&d_{2}\\
		a_{3}&b_{3}&c_{3}&d_{3}\\
		a_{4}&b_{3}&c_{4}&d_{4}			
	\end{pmatrix}
	, a_{i},b_{i},c_{i},d_{i}\in \mathbb{R}, a_1\neq 0		\right\}.	\nonumber
\end{eqnarray}
\item [$\diamond$] If $\lambda\neq 0$ and $a_1\neq\frac{\lambda}{2}$, we get a 
 $\lambda$-weighted Rota-Baxter Jacobi-Jordan algebra 
$(A, \mathcal{I})$ where
\begin{eqnarray}
\mathcal{I}\in\left\{	
\begin{pmatrix}
	a_{1}&0&0&0\\
	a_{2}&\frac{a^{2}_{1}}{2a_{1}+\lambda}&c_{2}&d_{2}\\
	a_{3}&b_{3}&c_{3}&d_{3}\\
	a_{4}&b_{3}&c_{4}&d_{4}			
\end{pmatrix}
, a_{i},b_{i},c_{i},d_{i}\in \mathbb{R}, a_1\neq\frac{\lambda}{2}	\right\}.	\nonumber
\end{eqnarray}
\end{enumerate}
\end{example}
\begin{example}\label{H}
	Let $A:=Span\{e_{1}, e_{2}, e_{3}, e_{4}\}$ be the $4$-dimensional Jacobi-Jordan algebra over $\mathbb{R}$ defined by: $e_{1}\ast e_{1}=e_{2},  e_{3}\ast e_{3}=e_{2}$ (see \cite{Momo4}). Then the linear map $\mathcal{I}:A\rightarrow A$ given in the basis $\{e_{1}, e_{2}, e_{3}, e_{4}\}$  by 
	\begin{eqnarray}
		\mathcal{I}=		
		\begin{pmatrix}
			a_{1}&b_{1}&c_{1}&d_{1}\\
			a_{2}&b_{2}&c_{2}&d_{2}\\
			a_{3}&b_{3}&c_{3}&d_{3}\\
			a_{4}&b_{4}&c_{4}&d_{4}			
		\end{pmatrix}
		, a_{i},b_{i},c_{i},d_{i}\in \mathbb{R} \nonumber	
	\end{eqnarray}
	is a $0$-weighted Rota-Baxter operator on 
	$(A,\ast)$ if and only if
	\begin{eqnarray}
		\mathcal{I}(e_{i}) \ast \mathcal{I}(e_{j})=\mathcal{I}\Big(\mathcal{I}(e_{i}) \ast e_{j} + e_{i} \ast \mathcal{I}(e_{j})\Big),  \forall  1 \leqslant i \leqslant j \leqslant 4,	\nonumber
	\end{eqnarray}
	that is  
	\begin{equation}
		\left\{\begin{array}{ll}
			b_{1}=b_{3}=d_{1}=d_{3}=0, \\
			a_{1}b_{4}= c_{3}b_{4}=(c_{1}+a_{3})b_{4}=0 \\
			a^{2}_{1} + a^{2}_{3} -2a_{1}b_{2}=c^{2}_{1}+c^{2}_{3} - 2c_{3} b_{2}=0 \\
			(a_{1}c_{1}+a_{3}c_{3})-b_{2}(c_{1}+a_{3})=0.
		\end{array} \right.\nonumber
	\end{equation}		
	Note that we do not classify all Rota-Baxter operators of the considered Jacobi-Jordan algebra. Hence, the pair $(A,\mathcal{I})$ is  a $0$-weighted Rota-Baxter Jacobi-Jordan algebras where 
	\begin{eqnarray}
		&&	\mathcal{I}\in\left\{\begin{pmatrix}
			0&0&0&0	\\
			a_2&0&c_2&d_2\\
			0&0&0&0\\
			a_4&b_4&c_4&d_4	
		\end{pmatrix} \mbox{, $ a_i, b_i,c_i,d_i \in\mathbb{R}$ } \right\}
		\bigcup
		\left\{\begin{pmatrix}
			0&0&0&0	\\
			a_2&b_2&c_2&d_2\\
			0&0&0&0\\
			a_4&b_4&c_4&d_4	
		\end{pmatrix} \mbox{, $ a_i, b_i,c_i,d_i \in\mathbb{R}, b_2b_4\neq 0$ } \right\}.\nonumber				
	\end{eqnarray}
\end{example}
Let recall the following definition from \cite{Momo5}.
	\begin{definition}
		Let $A$ be a Jacobi-Jordan algebra. A representation of $A$ \cite{Momo5} is a vector space $V$ with a linear map  $\rho : A\rightarrow gl(V)$ satisfying \begin{eqnarray}
				\rho(x\ast y)=-\rho(x)\rho(y)-\rho(y)\rho(x), \forall x,y \in A.\label{rpJJa}
			\end{eqnarray}
		A representation of $A$ as above is denoted by $(V,\rho)$ or simply by $V$ if no confusion arises. Observe that any Jacobi-Jordan algebra $A$ is a representation of itself with the action map  \\$\rho : A\rightarrow gl(V)$ defined by \begin{eqnarray}
				\rho(x)y:=x \ast y, \forall x,y \in A. \nonumber
			\end{eqnarray}
			Such representation is called the regular or adjoint representation of $(A,\ast).$ 
	\end{definition}
	\begin{definition}
		Let $(A,\mathcal{I})$ be a $\lambda$-weighted Rota-Baxter Jacobi-Jordan algebra. A representation of it is a triple $(V,\rho,\mathcal{T})$ in which $(V,\rho)$ is a representation of the Jacobi-Jordan algebra $A$ and $\mathcal{T} : V\rightarrow V$ is linear map such that \begin{eqnarray}
			\rho(\mathcal{I}x)(\mathcal{T}u) = \mathcal{T}\Big(\rho(\mathcal{I}x)u+\rho(x)(\mathcal{T}u)+\lambda\rho(x)u\Big), \forall x\in A, u\in V. \label{rpRJJa}
		\end{eqnarray} 
	A representation of $(A,\mathcal{I})$ as above is denoted simply by $(V,\mathcal{T})$.
	\end{definition}	
	\begin{example}	
		\begin{enumerate}
			\item [(i)]
			Any $\lambda$- weighted Rota-Baxter Jacobi-Jordan algebra $(A,\mathcal{I})$ is a representation of itself. Such representation is called the adjoint representation.
			\item [(ii)]
			Let $V$ be a representation of a Jacobi-Jordan algebra $A$. Then the pair $(V,id_{V})$ is a representation of the $(-1)$-weighted  Rota-Baxter Jacobi-Jordan algebra $(A,id_{A})$.
			\item [(iii)] Let $(V,\mathcal{T})$ be a representation of a $\lambda$-weighted Rota-Baxter Jacobi-Jordan algebra $(A,\mathcal{I})$. Then for any scalar $\mu\in \mathbb{K}$, the pair $(V,\mu\mathcal{T})$ is a representation of the $\mu\lambda$- weighted Rota-Baxter Jacobi-Jordan algebra $(A,\mu\mathcal{I}).$ 
		\end{enumerate}	
	\end{example}
	\begin{proposition}
		Let $(V,\mathcal{T})$ be a representation of a $\lambda$-weighted Rota-Baxter Jacobi-Jordan algebra $(A,\mathcal{I}).$ Then $(V,-\lambda id_{V}-\mathcal{T})$ is a representation of the $\lambda$-weighted Rota-Baxter Jacobi-Jordan algebra $(A,-\lambda id_{A}-\mathcal{I})$. 
	\end{proposition}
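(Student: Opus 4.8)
The plan is to verify the two defining conditions for the candidate representation $(V,-\lambda\, id_V-\mathcal{T})$ of the $\lambda$-weighted Rota-Baxter Jacobi-Jordan algebra $(A,-\lambda\, id_A-\mathcal{I})$, the latter being indeed such an algebra by Example \ref{E} (iv). Since the underlying action $\rho:A\rightarrow gl(V)$ is left unchanged, condition (\ref{rpJJa}) holds automatically; the whole content is therefore to check the Rota-Baxter compatibility (\ref{rpRJJa}) with $\mathcal{I}$ replaced by $\mathcal{I}':=-\lambda\, id_A-\mathcal{I}$ and $\mathcal{T}$ replaced by $\mathcal{T}':=-\lambda\, id_V-\mathcal{T}$.

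First I would expand the left-hand side $\rho(\mathcal{I}'x)(\mathcal{T}'u)$ using the linearity of $\rho$ together with the definitions of $\mathcal{I}'$ and $\mathcal{T}'$; this yields the four terms $\lambda^2\rho(x)u$, $\lambda\rho(x)(\mathcal{T}u)$, $\lambda\rho(\mathcal{I}x)u$ and $\rho(\mathcal{I}x)(\mathcal{T}u)$. Next I would simplify the argument of $\mathcal{T}'$ appearing on the right-hand side, namely $\rho(\mathcal{I}'x)u+\rho(x)(\mathcal{T}'u)+\lambda\rho(x)u$: the coefficient of $\rho(x)u$ collapses (from $-\lambda-\lambda+\lambda$) to $-\lambda$, so this argument equals $-\lambda\rho(x)u-\rho(\mathcal{I}x)u-\rho(x)(\mathcal{T}u)$.

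Then I would apply $\mathcal{T}'=-\lambda\, id_V-\mathcal{T}$ to this vector and expand, separating the contributions of $-\lambda\, id_V$ from those of $-\mathcal{T}$. The key step is to recognize inside the $-\mathcal{T}$ part the combination $\mathcal{T}\bigl(\rho(\mathcal{I}x)u+\rho(x)(\mathcal{T}u)+\lambda\rho(x)u\bigr)$, which by the original compatibility (\ref{rpRJJa}) equals $\rho(\mathcal{I}x)(\mathcal{T}u)$. After this substitution the right-hand side reduces to exactly the same four terms found for the left-hand side, which completes the verification.

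The argument is routine linear algebra; the only delicate points are the sign bookkeeping when expanding the nested applications of $\mathcal{T}'$ and identifying the precise moment at which (\ref{rpRJJa}) — and not (\ref{rpJJa}), which plays no role here — must be invoked. One may note in passing that $\mathcal{T}\mapsto-\lambda\, id_V-\mathcal{T}$ is an involution, mirroring Example \ref{E} (iv), but the direct computation above is the most economical route.
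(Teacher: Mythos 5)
Your proof is correct and follows essentially the same route as the paper: expand both sides of the compatibility condition (\ref{rpRJJa}) for $(-\lambda\, id_A-\mathcal{I},\,-\lambda\, id_V-\mathcal{T})$ and invoke the original identity (\ref{rpRJJa}) once to match the remaining terms. The only cosmetic difference is that you simplify the argument of $\mathcal{T}'$ before applying it and use (\ref{rpRJJa}) on the right-hand side, whereas the paper applies it to rewrite $\rho(\mathcal{I}x)\mathcal{T}u$ on the left-hand side; the computation is the same.
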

	\begin{proof} Pick $ x \in A, u\in V,$ then, we have:
		\begin{eqnarray}					
			\rho((-\lambda id_{A}-\mathcal{I})x)(-\lambda id_{V}-\mathcal{T})(u)&=&
			\lambda^{2}\rho(x)u+\lambda\rho(\mathcal{I}x)u+\lambda\rho(x)\mathcal{T}u+\rho(\mathcal{I}x)\mathcal{T}u \nonumber\\
			&=&\lambda^{2}\rho(x)u+\lambda\rho(\mathcal{I}x)u+\lambda\rho(x)\mathcal{T}u  \nonumber\\
			&+&\mathcal{T}\Big(\rho(\mathcal{I}x)u+\rho(x)\mathcal{T}u+\lambda\rho(x)u\Big) \label{BA} \mbox{( by (\ref{rpRJJa}) )}		
		\end{eqnarray}
		Next, we obtain
		\begin{eqnarray}
			(-\lambda id_{V}-\mathcal{T})\Big( \rho((-\lambda id_{A}-\mathcal{I})x)u+\rho(x)(-\lambda id_{V}-\mathcal{T})u+\lambda\rho(x)u\Big) \nonumber \\
			=\lambda^{2}\rho(x)u+\lambda\mathcal{T}(\rho(x)u)+\lambda\rho(\mathcal{I}x)u+\mathcal{T}(\rho(\mathcal{I}x)u)+\cancel{\lambda^{2}\rho(x)u}+\cancel{\lambda\mathcal{T}(\rho(x)u)}\nonumber \\
			+\lambda\rho(x)\mathcal{T}u 
			+\mathcal{T}(\rho(x)\mathcal{T}u)-\cancel{\lambda^{2}\rho(x)u}-\cancel{\lambda\mathcal{T}(\rho(x)u}). \label{DA}
		\end{eqnarray}		
		The expressions  in (\ref{BA}) and (\ref{DA}) are the same. Thus, the pair 
		$(-\lambda id_{V}-\mathcal{T})$ is a representation of $(A,-\lambda id_{A}-\mathcal{I})$.	
	\end{proof}
	\begin{proposition}
		Let $\{(V_{i},\mathcal{T}_{i})\}_{i\in I\subset \mathbb{N}}$ be a family of representations of a $\lambda$-weighted  Rota-Baxter Jacobi-Jordan algebra $(A,\mathcal{I})$. Then the pair $(\oplus _{i\in I}V_{i},\oplus _{i \in I}\mathcal{T}_{i})$ is a representation of $(A,\mathcal{I}).$
	\end{proposition}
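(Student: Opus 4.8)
The plan is to equip $V := \oplus_{i\in I} V_i$ with the obvious diagonal structure and verify the two defining axioms componentwise. First I would define the action map $\rho : A \to gl(V)$ by $\rho(x)\big((u_i)_{i\in I}\big) := (\rho_i(x)u_i)_{i\in I}$ and the operator $\mathcal{T} := \oplus_{i\in I}\mathcal{T}_i$, that is, $\mathcal{T}\big((u_i)_{i\in I}\big) := (\mathcal{T}_i u_i)_{i\in I}$. Since an element of the direct sum is a finitely supported family, these prescriptions yield well-defined linear endomorphisms of $V$, and $\rho$ is clearly linear in $x$.

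Next I would check that $(V,\rho)$ is a representation of the Jacobi-Jordan algebra $A$, i.e. that $(\ref{rpJJa})$ holds. Evaluating both sides of $\rho(x\ast y) = -\rho(x)\rho(y) - \rho(y)\rho(x)$ on an arbitrary element $(u_i)_{i\in I}$ and comparing the $i$-th coordinates reduces the claim to $\rho_i(x\ast y)u_i = -\rho_i(x)\rho_i(y)u_i - \rho_i(y)\rho_i(x)u_i$ for each $i$, which is exactly $(\ref{rpJJa})$ for $(V_i,\rho_i)$ and hence holds by hypothesis.

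Then I would verify the weighted Rota-Baxter compatibility $(\ref{rpRJJa})$ for the triple $(V,\rho,\mathcal{T})$. Applying the operators $\rho(\mathcal{I}x)\circ\mathcal{T}$ and $\mathcal{T}\circ\big(\rho(\mathcal{I}x)+\rho(x)\circ\mathcal{T}+\lambda\rho(x)\big)$ to $(u_i)_{i\in I}$ and reading off the $i$-th component, the desired equality becomes precisely $\rho_i(\mathcal{I}x)(\mathcal{T}_i u_i) = \mathcal{T}_i\big(\rho_i(\mathcal{I}x)u_i + \rho_i(x)(\mathcal{T}_i u_i) + \lambda\rho_i(x)u_i\big)$, which is $(\ref{rpRJJa})$ for $(V_i,\mathcal{T}_i)$. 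Therefore $(V,\mathcal{T})$ is a representation of $(A,\mathcal{I})$.

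The argument is essentially bookkeeping, and I do not anticipate a genuine obstacle; the only point deserving a word of care is the possibly infinite index set, where one must recall that $\oplus_{i\in I}V_i$ consists of finitely supported families so that $\rho(x)$ and $\mathcal{T}$ really do land in $V$ and the coordinatewise comparisons above are legitimate.
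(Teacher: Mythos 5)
Your proposal is correct and follows essentially the same route as the paper: equip $\oplus_{i\in I}V_i$ with the diagonal action and operator, then verify both the Jacobi--Jordan representation axiom and the weighted Rota--Baxter compatibility coordinatewise. The only (harmless) difference is that you spell out the first verification and the finite-support point, which the paper leaves implicit.
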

	\begin{proof}
		Let $\rho_{i}:A\rightarrow gl(V_{i})$ denote the action of the Jacobi-Jordan algebra $A$ on the representation $V_{i}.$ Then it follows that $\rho:A\rightarrow gl(\oplus_{i\in I}V_{i})$, $\rho(x)(u_{i})_{i \in I}= (\rho_{i}(x)u_{i})_{i\in I}$ is a representation of $(A,\ast)$ on $\oplus_{i \in I}V_{i}.$ In addition, let $ x \in A$ and $(u_{i})_{i\in I} \oplus_{i\in I}V_{i}$, then we have \begin{eqnarray}
			\rho(\mathcal{I}x)(\oplus_{i\in I}{\mathcal{T}_{i}})(u_{i})_{i\in I}&=&\Big(\rho_{i}(\mathcal{I}x)\mathcal{T}_{i}(u_{i})\Big)_{i\in I} \nonumber \\
			&=& \Big(\mathcal{T}_{i}(\rho_{i}(\mathcal{I}x)(u_{i})+\rho_{i}(x)\mathcal{T}_{i}(u_{i})+\lambda\rho_{i}(x)u_{i})\Big)_{i\in I}  \mbox{ ( by (\ref{rpRJJa}) )}\nonumber\\
			&=&(\oplus_{i\in I}\mathcal{T}_{i})\Big(\rho(\mathcal{I}x)(u_{i})_{i\in I}+\rho(x)(\oplus_{i\in I}\mathcal{T}_{i}(u_{i}))_{i\in I}+\lambda\rho(x)(u_{i})_{i \in I}\Big).\nonumber
		\end{eqnarray}	
		Hence the result follows.
	\end{proof} 
Given  a representation $(V,\rho)$  of a Jacobi-Jordan algebra $A$, there is a representation of $A$ on the space $gl(V)$ with the action given by \begin{eqnarray}
		\hat{\rho}: A\rightarrow gl(gl(V)),
		(\hat{\rho}(x)f)u=-f(\rho(x)u), \forall (x,f,u) \in A\times gl(V)\times V. \nonumber	
	\end{eqnarray}
	With this representation, we obtain the following result:
	\begin{proposition}
		Let $(V,\mathcal{T})$ be a representation of a $\lambda$-weighted Rota-Baxter Jacobi-Jordan $(A,\mathcal{I})$. Then the pair $(gl(V),\hat{\mathcal{T}})$ is also a representation of $(A,\mathcal{I})$, where 
		\begin{eqnarray}
			\hat{\mathcal{T}}&:& gl(V)\rightarrow gl(V) ,
			\hat{\mathcal{T}}(f)(u)=-\lambda f(u)-f(\mathcal{T}(u)), \forall (f,u)\in gl(V)\times V. \nonumber
		\end{eqnarray}
	\end{proposition}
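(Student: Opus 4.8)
The plan is to check the defining identity (\ref{rpRJJa}) for the triple $(gl(V),\hat{\rho},\hat{\mathcal{T}})$ by evaluating both of its sides on an arbitrary vector $u\in V$ and then reducing the resulting equality in $V$ to the hypothesis that $(V,\mathcal{T})$ itself satisfies (\ref{rpRJJa}). Since the construction recalled just before the statement already provides that $(gl(V),\hat{\rho})$ is a representation of the Jacobi-Jordan algebra $A$, the only thing left to verify is the weighted Rota-Baxter compatibility
\[
\hat{\rho}(\mathcal{I}x)\big(\hat{\mathcal{T}}f\big)=\hat{\mathcal{T}}\Big(\hat{\rho}(\mathcal{I}x)f+\hat{\rho}(x)(\hat{\mathcal{T}}f)+\lambda\,\hat{\rho}(x)f\Big),\qquad \forall\, x\in A,\ f\in gl(V).
\]

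For the left-hand side I would unfold the two definitions in turn, obtaining $\big(\hat{\rho}(\mathcal{I}x)(\hat{\mathcal{T}}f)\big)(u)=-(\hat{\mathcal{T}}f)(\rho(\mathcal{I}x)u)=\lambda f(\rho(\mathcal{I}x)u)+f\big(\mathcal{T}\rho(\mathcal{I}x)u\big)$. For the right-hand side I would first simplify the bracketed operator $g:=\hat{\rho}(\mathcal{I}x)f+\hat{\rho}(x)(\hat{\mathcal{T}}f)+\lambda\,\hat{\rho}(x)f$ evaluated on a generic $v\in V$: the term $\hat{\rho}(x)(\hat{\mathcal{T}}f)$ contributes $+\lambda f(\rho(x)v)$, which exactly cancels the $-\lambda f(\rho(x)v)$ coming from $\lambda\,\hat{\rho}(x)f$, so that $g(v)=-f(\rho(\mathcal{I}x)v)+f(\mathcal{T}\rho(x)v)$. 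Applying $\hat{\mathcal{T}}$ to $g$ and evaluating at $u$ then yields $\hat{\mathcal{T}}(g)(u)=-\lambda g(u)-g(\mathcal{T}u)=\lambda f(\rho(\mathcal{I}x)u)-\lambda f(\mathcal{T}\rho(x)u)+f(\rho(\mathcal{I}x)\mathcal{T}u)-f(\mathcal{T}\rho(x)\mathcal{T}u)$.

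Comparing the two expressions and cancelling the common term $\lambda f(\rho(\mathcal{I}x)u)$, the equality to be established reduces to
\[
f\big(\rho(\mathcal{I}x)\mathcal{T}u\big)=f\Big(\mathcal{T}\rho(\mathcal{I}x)u+\mathcal{T}\rho(x)\mathcal{T}u+\lambda\,\mathcal{T}\rho(x)u\Big)=f\Big(\mathcal{T}\big(\rho(\mathcal{I}x)u+\rho(x)\mathcal{T}u+\lambda\rho(x)u\big)\Big),
\]
which follows at once by applying the linear map $f$ to both sides of the identity (\ref{rpRJJa}) satisfied by the representation $(V,\mathcal{T})$ of $(A,\mathcal{I})$. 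This finishes the verification, and hence the proof. I expect the only delicate point to be the sign and composition-order bookkeeping — in particular tracking the two nested minus signs in the definition of $\hat{\rho}$ together with the weight term $-\lambda f$ in $\hat{\mathcal{T}}$, and noticing the cancellation inside $g$ — rather than any conceptual difficulty.
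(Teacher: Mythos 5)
Your proposal is correct and follows essentially the same route as the paper: evaluate both sides of the compatibility identity on an arbitrary $u\in V$, unfold the definitions of $\hat{\rho}$ and $\hat{\mathcal{T}}$, cancel the weight terms, and reduce the remaining equality to $f$ applied to the identity (\ref{rpRJJa}) for $(V,\mathcal{T})$. Your intermediate simplification of the bracketed operator $g$ before applying $\hat{\mathcal{T}}$ only reorganizes the same cancellations the paper carries out in one long chain.
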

	\begin{proof}
		Let $ x\in A, u\in V$ and $f\in gl(V)$, then we compute
		\begin{eqnarray}
			\Big(\hat{\rho}(\mathcal{I}x)\hat{\mathcal{T}}(f)\Big)u=-\hat{\mathcal{T}}(f)(\rho(\mathcal{I}x)u) 
			=\lambda f(\rho(\mathcal{I}x)u)+	f(\mathcal{T}(\rho(\mathcal{I}x)u)). \label{Ja}	 	
		\end{eqnarray}
	Next, we also  have
\begin{eqnarray}
	&&	\Big((\hat{\rho}(\mathcal{I}x)f+\hat{\rho}(x)\hat{\mathcal{T}}(f)+\lambda\hat{\rho}(x)f\Big)u\nonumber\\
		&&=-\lambda\Big(\hat{\mathcal{T}}(\hat{\rho}(\mathcal{I}x)f+\hat{\rho}(x)\hat{\mathcal{T}}(f)+\lambda\hat{\rho}(x)f\Big)u
		-\Big(\hat{\rho}(\mathcal{I}x)f+
		\hat{\rho}(x)\hat{\mathcal{T}}(f)+\lambda\hat{\rho}(x)f\Big)(\mathcal{T}u)\nonumber \\
		&&=\lambda\Big(f(\rho(\mathcal{I}x)u)+\hat{\mathcal{T}}(f)(\rho(x)u)+\lambda f(\rho(x)u)\Big)
		+\Big(f(\rho(\mathcal{I}x)(\mathcal{T}u))
		+\hat{\mathcal{T}}(f)(\rho(x)(\mathcal{T}u)) 
		+\lambda f(\rho(x)(\mathcal{T}u)\Big)\nonumber\\
		&&=\lambda\Big( f(\rho(\mathcal{I}x)u)-\cancel{\lambda f(\rho(x)u)}-\cancel{f(\mathcal{T}{\rho}(x)u)}+\cancel{\lambda f(\rho(x)u))}\Big) 
		+f\Big(\mathcal{T}(\rho(\mathcal{I}x)u)+\cancel{\mathcal{T}(\rho(x)\mathcal{T}u)}+\cancel{\mathcal{T}(\rho(x)u)}\nonumber\\
		&&-\cancel{\lambda\rho(x)(\mathcal{T}u)}-\cancel{\mathcal{T}(\rho(x)(\mathcal{T}u)}
		+\cancel{\lambda\rho(x)(\mathcal{T}u)}\Big)\nonumber\\
		&&=\lambda f(\rho(\mathcal{I}x)u)+f(\mathcal{T}(\rho(\mathcal{I}x)u)).\label{Jb}			
\end{eqnarray}
Hence, by  (\ref{Ja}) and (\ref{Jb}),  $(V,\hat{\mathcal{T}})$ is a representation of $(A, \mathcal{I})$.
	\end{proof}
As expected, let construct now a semidirect product in the context of $\lambda$-weight Rota-Baxter Jacobi-Jordan algebras.
	\begin{proposition}\label{sdpr}
		Let $(V,\mathcal{T})$ be a representation of a $\lambda$-weighted Rota-Baxter Jacobi-Jordan algebra $(A, \mathcal{I})$. Then $(A\oplus V,\mathcal{I}\oplus\mathcal{T})$ is a $\lambda$-weighted Rota-Baxter Jacobi-Jordan algebra, where the product on $A\oplus V$ is given by the semidirect product 
		\begin{eqnarray}
		(x,u)\ast_{\ltimes}(y,v):=\Big(x\ast y,\rho(x)v+\rho(y)u\Big), \forall x,y\in A, \forall u,v\in V.\label{spd1}
		\end{eqnarray} 
	\end{proposition}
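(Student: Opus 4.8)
The plan is to verify the two defining conditions separately: first that $(A\oplus V,\ast_{\ltimes})$ is a Jacobi-Jordan algebra, and then that $\mathcal{I}\oplus\mathcal{T}$ is a $\lambda$-weighted Rota-Baxter operator on it.

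For the first step, commutativity of $\ast_{\ltimes}$ is immediate from the symmetry of \eqref{spd1} in its two arguments together with commutativity of $\ast$ on $A$. For the Jacobi identity \eqref{jji} I would compute the cyclic sum of $\big((x,u)\ast_{\ltimes}(y,v)\big)\ast_{\ltimes}(z,w)$. Its $A$-component is exactly the cyclic sum $(x\ast y)\ast z+(y\ast z)\ast x+(z\ast x)\ast y$, which vanishes because $A$ is Jacobi-Jordan. Its $V$-component, after expanding with \eqref{spd1}, is a sum of terms of the form $\rho(x\ast y)w$ and $\rho(z)\rho(x)v$ together with their cyclic images; collecting these and substituting $\rho(x\ast y)=-\rho(x)\rho(y)-\rho(y)\rho(x)$ from \eqref{rpJJa} makes the whole expression cancel in pairs. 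Hence $(A\oplus V,\ast_{\ltimes})$ is a Jacobi-Jordan algebra.

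For the second step, set $\mathcal{J}:=\mathcal{I}\oplus\mathcal{T}$, so that $\mathcal{J}(x,u)=(\mathcal{I}x,\mathcal{T}u)$, and expand both sides of the Rota-Baxter identity \eqref{ro} for $\mathcal{J}$ on arbitrary $(x,u),(y,v)\in A\oplus V$ using \eqref{spd1}. The $A$-component of \eqref{ro} for $\mathcal{J}$ is precisely \eqref{ro} for $\mathcal{I}$ on $A$, which holds by hypothesis. The $V$-component, after grouping the terms involving $x,v$ separately from those involving $y,u$, reduces to
\[
\rho(\mathcal{I}x)(\mathcal{T}v)+\rho(\mathcal{I}y)(\mathcal{T}u)=\mathcal{T}\Big(\rho(\mathcal{I}x)v+\rho(x)(\mathcal{T}v)+\lambda\rho(x)v\Big)+\mathcal{T}\Big(\rho(\mathcal{I}y)u+\rho(y)(\mathcal{T}u)+\lambda\rho(y)u\Big),
\]
which is just \eqref{rpRJJa} applied to the pair $(x,v)$ added to \eqref{rpRJJa} applied to the pair $(y,u)$. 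Therefore $\mathcal{J}$ satisfies \eqref{ro}, and $(A\oplus V,\mathcal{I}\oplus\mathcal{T})$ is a $\lambda$-weighted Rota-Baxter Jacobi-Jordan algebra.

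The only point requiring care is the bookkeeping in the $V$-components of both identities: one must keep track of which of the two arguments is acted on by $\rho$ and which copy of $\mathcal{T}$ appears, and check that the symmetrization built into $\ast_{\ltimes}$ matches exactly the symmetrization produced by applying \eqref{rpJJa} (resp.\ \eqref{rpRJJa}) to the two ordered pairs. I do not expect any conceptual obstacle beyond these two representation axioms.
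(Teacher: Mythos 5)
Your proposal is correct and follows essentially the same route as the paper's proof: commutativity is immediate, the Jacobi identity splits into the $A$-component (Jacobi identity in $A$) and a $V$-component that cancels via \eqref{rpJJa}, and the Rota--Baxter identity for $\mathcal{I}\oplus\mathcal{T}$ splits into \eqref{ro} on the $A$-component and two instances of \eqref{rpRJJa} on the $V$-component. The bookkeeping you flag works out exactly as you describe.
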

	\begin{proof}
		The proof of the commutativity of $\ast_{\ltimes}$ is obvious. 
			Now, pick
			$x,y,z \in A, u,v,w \in V$,	then  using the Jacobi identity in $(A,\ast)$ and (\ref{rpJJa}) we have			 
			\begin{eqnarray}
				\begin{split}				
			\Big((x,u)\ast_{\ltimes}(y,v)\Big)\ast_{\ltimes}(z,w)+\Big((y,v)\ast_{\ltimes}(z,w)\Big)\ast_{\ltimes}(x,u)+\Big((z,w)\ast_{\ltimes}(x,u)\Big)\ast_{\ltimes}(y,v) \nonumber \\			
			=\Big(\Big((x\ast y)\ast z +(y\ast z)\ast x+(z\ast x)\ast y)\Big),			
			\Big(\rho(x\ast y)w+\rho(x)\rho(y)w+\rho(y)\rho(x)w\Big)\nonumber\\
			+\Big(\rho(y\ast z)u +\rho(y)\rho(z)u+\rho(z)\rho(y)u \Big) 
			+\Big(\rho(z\ast x)v+\rho(z)\rho(x)v+\rho(x)\rho(z)v\Big)\Big)=0.				
			\end{split}						
			\end{eqnarray}
Hence, the Jacobi identity in $A\oplus V$ holds. 	
Finally, to show that $\mathcal{I}\oplus\mathcal{T}$ is a $\lambda$-weight Rota-Baxter operator on the semi-direct product $\lambda$-weighted Rota-Baxter Jacobi-Jordan algebra, pick $\forall x,y\in A, u,v\in V$. Then, we observe first that 
			\begin{eqnarray}					
			&&	(\mathcal{I}\oplus\mathcal{T})(x,u)\ast_{\ltimes} (\mathcal{I}\oplus\mathcal{T})(y,v)=(\mathcal{I}x,\mathcal{T}u)\ast_{\ltimes}(\mathcal{I}y,\mathcal{T}v)
				=\Big(\mathcal{I}x\ast \mathcal{I}y,\rho(\mathcal{I}x)\mathcal{T}v+\rho(\mathcal{I}y)\mathcal{T}u\Big) \nonumber\\
				&&=\Big(\mathcal{I}\Big(\mathcal{I}x\ast y+x\ast \mathcal{I}y+\lambda x\ast y\Big)
				,\mathcal{T}\Big(\rho(\mathcal{I}x)v+\rho(x)\mathcal{T}v+\lambda\rho(x)v\Big)
				\nonumber\\
				&&+\mathcal{T}\Big(\rho(\mathcal{I}y)u+\rho(y)\mathcal{T}u+\lambda\rho(y)u\Big)\Big) \mbox{ ( by (\ref{ro}) and (\ref{rpRJJa}) ).}\label{sd1}
				\end{eqnarray}
	Next, we have
		\begin{eqnarray}
		(\mathcal{I}\oplus\mathcal{T})(x,u)\ast_{\ltimes}(y,v)=\Big(\mathcal{I}x\ast y,\rho(\mathcal{I}x)v+\rho(y)\mathcal{T}u\Big) \label{Gu} \\
		(x,u)\ast_{\ltimes}(\mathcal{I}\oplus\mathcal{T})(y,v)=\Big(x\ast\mathcal{I}y,\rho(x)\mathcal{T}v+\rho(\mathcal{T}y)u\Big) \label{Go}\\	
		\lambda\Big((x,u)\ast_{\ltimes}(y,v)\Big)=\lambda\Big(x\ast y,\rho(x)v+\rho(y)u\Big)\label{Ga}	
		\end{eqnarray}
	i.e.,
	\begin{eqnarray}
	&&\Big((\mathcal{I}\oplus\mathcal{T})(x,u)\ast_{\ltimes}(y,v)+(x,u)\ast_{\ltimes}(\mathcal{I}\oplus\mathcal{T})(y,v)+\lambda\Big((x,u)\ast_{\ltimes}(y,v)\Big)\Big)\nonumber\\
	&&=\Big(\mathcal{I}x\ast y+x\ast\mathcal{I}y+\lambda x\ast y,
	\rho(\mathcal{I}x)v+\rho(x)\mathcal{T}v+\lambda\rho(x)v+\rho(\mathcal{I}y)u+\rho(y)\mathcal{T}u+\lambda\rho(y)u\Big).\label{sd2}
	\end{eqnarray}
Hence, it follows from (\ref{sd1}) and (\ref{sd2}) that
\begin{eqnarray}			
&&(\mathcal{I}\oplus\mathcal{T})\Big((\mathcal{I}\oplus\mathcal{T})(x,u)\ast_{\ltimes}(y,v)+(x,u)\ast_{\ltimes}(\mathcal{I}\oplus\mathcal{T})(y,v)+\lambda(x,u)\ast_{\ltimes}(y,v)\Big)\nonumber\\
&&=(\mathcal{I}\oplus\mathcal{T})(x,u)\ast_{\ltimes} (\mathcal{I}\oplus\mathcal{T})(y,v).	\nonumber
\end{eqnarray}			
	The $\lambda$-weighted Rota-Baxter Jacobi-Jordan algebra $(A\oplus V,\mathcal{I}\oplus\mathcal{T})$  is called the semidirect product of $(A,\mathcal{I})$ by $(V,\mathcal{T})$.
\end{proof}	
One can prove also the converse of the Proposition \ref{sdpr} and therefore, we have:
\begin{theorem}
	Let $(A,\mathcal{I})$ be a $\lambda$-weighted Rota-Baxter Jacobi-Jordan algebra. Let $V$ be a vector space and $\rho: A \longrightarrow gl(V),$ $\mathcal{T}: V\rightarrow V$ be two linear maps.
	Then $(V=(V, \rho),  \mathcal{T})$ is a representation of the $\lambda$-weighted Rota-Baxter Jacobi-Jordan algebra  $(A,\mathcal{I})$ if and only if
	$(A\oplus V,\mathcal{I}\oplus\mathcal{T})$ is a  $\lambda$-weighted Rota-Baxter Jacobi-Jordan algebra, where $A\oplus V$ is equipped with the product given in (\ref{spd1}).
\end{theorem}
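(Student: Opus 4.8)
The plan is to dispose of the two implications separately. The ``only if'' direction is exactly the content of Proposition \ref{sdpr}: if $(V,\rho,\mathcal{T})$ is a representation of $(A,\mathcal{I})$, then the semidirect product $(A\oplus V,\mathcal{I}\oplus\mathcal{T})$ with the product $\ast_{\ltimes}$ of \eqref{spd1} is a $\lambda$-weighted Rota-Baxter Jacobi-Jordan algebra. So it remains only to prove the converse. Assume therefore that $(A\oplus V,\mathcal{I}\oplus\mathcal{T})$ is a $\lambda$-weighted Rota-Baxter Jacobi-Jordan algebra for the product \eqref{spd1}. I would first observe that $A\oplus 0$ is a subalgebra (isomorphic to $A$) while $0\oplus V$ is an ideal with zero multiplication, so every identity on $A\oplus V$ may be tested on elements of the special forms $(x,0)$ and $(0,u)$, and the desired axioms for $(V,\rho,\mathcal{T})$ extracted by projecting the resulting identities onto the $V$-component; the $A$-components will merely reproduce the already-assumed structure of $(A,\mathcal{I})$.

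Concretely, there are two things to check. First, to get \eqref{rpJJa}, apply the Jacobi identity \eqref{jji} of $A\oplus V$ to the triple $(x,0),(y,0),(0,w)$. Using \eqref{spd1} the three cyclic terms are $\big(0,\rho(x\ast y)w\big)$, $\big(0,\rho(x)\rho(y)w\big)$ and $\big(0,\rho(y)\rho(x)w\big)$, whose sum is zero; the $V$-component gives $\rho(x\ast y)w=-\rho(x)\rho(y)w-\rho(y)\rho(x)w$ for all $x,y\in A$, $w\in V$, so $(V,\rho)$ is a representation of the Jacobi-Jordan algebra $A$. Second, to get \eqref{rpRJJa}, apply the $\lambda$-weighted Rota-Baxter identity \eqref{ro} for $\mathcal{I}\oplus\mathcal{T}$ to the pair $(x,0)$ and $(0,u)$. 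The left-hand side equals $(\mathcal{I}x,0)\ast_{\ltimes}(0,\mathcal{T}u)=\big(0,\rho(\mathcal{I}x)\mathcal{T}u\big)$, while the right-hand side equals $(\mathcal{I}\oplus\mathcal{T})\big(0,\rho(\mathcal{I}x)u+\rho(x)\mathcal{T}u+\lambda\rho(x)u\big)=\big(0,\mathcal{T}(\rho(\mathcal{I}x)u+\rho(x)\mathcal{T}u+\lambda\rho(x)u)\big)$. Comparing $V$-components yields $\rho(\mathcal{I}x)\mathcal{T}u=\mathcal{T}\big(\rho(\mathcal{I}x)u+\rho(x)\mathcal{T}u+\lambda\rho(x)u\big)$, which is precisely \eqref{rpRJJa}. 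Hence $(V=(V,\rho),\mathcal{T})$ is a representation of $(A,\mathcal{I})$.

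I do not expect any genuine obstacle: each step is just the ``extraction'' dual of a computation already performed in the proof of Proposition \ref{sdpr}, so that in fact a single chain of equalities, read in one direction or the other, delivers both implications at once. The only point requiring care is the bookkeeping of which component of $A\oplus V$ encodes which axiom --- the $V$-component of the Jacobi identity produces \eqref{rpJJa}, the $V$-component of the Rota-Baxter identity produces \eqref{rpRJJa}, and one should note explicitly that the commutativity of $\ast_{\ltimes}$ and the $A$-components of \eqref{ro} and \eqref{jji} add nothing new beyond the hypothesis that $(A,\mathcal{I})$ is a $\lambda$-weighted Rota-Baxter Jacobi-Jordan algebra.
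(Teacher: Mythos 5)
Your proposal is correct and follows the same route the paper intends: the ``only if'' direction is Proposition \ref{sdpr}, and the converse is obtained by specializing the Jacobi identity of $(A\oplus V,\ast_{\ltimes})$ to triples $(x,0),(y,0),(0,w)$ and the Rota--Baxter identity of $\mathcal{I}\oplus\mathcal{T}$ to pairs $(x,0),(0,u)$, reading off the $V$-components; I checked these computations and they are right. In fact the paper omits the converse entirely (it merely states that it ``can be proved''), so your argument supplies exactly the missing details rather than diverging from the paper's approach.
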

	\begin{proposition} $\label{Po}$
		Let $(A,\mathcal{I})$ be a $\lambda$-weighted Rota-Baxter Jacobi-Jordan algebra. 
		\begin{enumerate}
			\item [(i)] The pair $(A,\ast_{\mathcal{I}})$ is a Jacobi-Jordan algebra, where  
			\begin{eqnarray}
				x\ast_{\mathcal{I}}y:= \mathcal{I}(x)\ast y+x \ast \mathcal{I}(y)+\lambda(x \ast y),\forall x,y\in A. \label{TO}		
			\end{eqnarray}
			We denote this algebra by $A_{\mathcal{I}}.$
			\item [(ii)] The pair $(A_{\mathcal{I}},\mathcal{I})$	is a $\lambda$- weighted Rota-Baxter Jacobi-Jordan algebra and the map \\ $\mathcal{I}:A_{\mathcal{I}}\longrightarrow A$ is a morphism of $\lambda$- weighted Rota-Baxter Jacobi-Jordan algebras.
		\end{enumerate}		 
	\end{proposition}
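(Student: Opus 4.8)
The plan is to base everything on a single reformulation of the hypothesis: unravelling the definition (\ref{TO}), the weighted Rota-Baxter identity (\ref{ro}) says exactly that
\[
\mathcal{I}(x\ast_{\mathcal{I}}y)=\mathcal{I}(x)\ast\mathcal{I}(y),\qquad \forall x,y\in A,
\]
that is, $\mathcal{I}$ viewed as a map $A_{\mathcal{I}}\to A$ intertwines the products $\ast_{\mathcal{I}}$ and $\ast$. I would record this identity first, because it does essentially all of the work in part (ii) and removes the most dangerous term in part (i).

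For (i), commutativity of $\ast_{\mathcal{I}}$ is immediate from commutativity of $\ast$. For the Jacobi identity I would expand $(x\ast_{\mathcal{I}}y)\ast_{\mathcal{I}}z$ by applying (\ref{TO}) twice and using the displayed identity to rewrite the term $\mathcal{I}(x\ast_{\mathcal{I}}y)\ast z$ as $(\mathcal{I}x\ast\mathcal{I}y)\ast z$; this produces the seven terms $(\mathcal{I}x\ast\mathcal{I}y)\ast z$, $(\mathcal{I}x\ast y)\ast\mathcal{I}z$, $(x\ast\mathcal{I}y)\ast\mathcal{I}z$, $\lambda(x\ast y)\ast\mathcal{I}z$, $\lambda(\mathcal{I}x\ast y)\ast z$, $\lambda(x\ast\mathcal{I}y)\ast z$ and $\lambda^{2}(x\ast y)\ast z$. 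Writing $J(a,b,c):=(a\ast b)\ast c+(b\ast c)\ast a+(c\ast a)\ast b$, which vanishes identically on $A$ by (\ref{jji}), I would then verify that the cyclic sum $(x\ast_{\mathcal{I}}y)\ast_{\mathcal{I}}z+(y\ast_{\mathcal{I}}z)\ast_{\mathcal{I}}x+(z\ast_{\mathcal{I}}x)\ast_{\mathcal{I}}y$ regroups, term by term, as
\[
\lambda^{2}J(x,y,z)+\lambda\bigl(J(\mathcal{I}x,y,z)+J(x,\mathcal{I}y,z)+J(x,y,\mathcal{I}z)\bigr)+J(\mathcal{I}x,\mathcal{I}y,z)+J(\mathcal{I}x,y,\mathcal{I}z)+J(x,\mathcal{I}y,\mathcal{I}z),
\]
every summand of which is $0$. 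Matching up the nine monomials that contribute (after the three cyclic shifts) to each of the last six $J$'s is the only bookkeeping involved, and I expect this term-matching to be the main — though entirely routine — obstacle; once the seven terms above are in hand, the regrouping uses nothing beyond the Jacobi identity of $A$, not even commutativity.

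For (ii), part (i) already shows that $(A_{\mathcal{I}},\ast_{\mathcal{I}})$ is a Jacobi-Jordan algebra, so it only remains to see that $\mathcal{I}$ is a $\lambda$-weighted Rota-Baxter operator on it. I would use linearity of $\mathcal{I}$ together with the displayed identity applied to the pairs $(\mathcal{I}x,y)$, $(x,\mathcal{I}y)$ and $(x,y)$ to obtain
\[
\mathcal{I}\bigl(\mathcal{I}(x)\ast_{\mathcal{I}}y+x\ast_{\mathcal{I}}\mathcal{I}(y)+\lambda(x\ast_{\mathcal{I}}y)\bigr)=\mathcal{I}(\mathcal{I}x)\ast\mathcal{I}(y)+\mathcal{I}(x)\ast\mathcal{I}(\mathcal{I}y)+\lambda\bigl(\mathcal{I}(x)\ast\mathcal{I}(y)\bigr),
\]
and the right-hand side is exactly $\mathcal{I}(x)\ast_{\mathcal{I}}\mathcal{I}(y)$ by the definition (\ref{TO}); hence $(A_{\mathcal{I}},\mathcal{I})$ is a $\lambda$-weighted Rota-Baxter Jacobi-Jordan algebra. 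Finally, the displayed identity says precisely that $\mathcal{I}\colon A_{\mathcal{I}}\to A$ is a morphism of Jacobi-Jordan algebras, and it trivially intertwines the Rota-Baxter operators of source and target (both equal to $\mathcal{I}$), so it is a morphism of $\lambda$-weighted Rota-Baxter Jacobi-Jordan algebras, which completes the argument.
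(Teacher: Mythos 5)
Your proposal is correct and follows essentially the same route as the paper: both rest on the identity $\mathcal{I}(x\ast_{\mathcal{I}}y)=\mathcal{I}(x)\ast\mathcal{I}(y)$, decompose the cyclic sum for $\ast_{\mathcal{I}}$ into Jacobiators of $(A,\ast)$ evaluated at $x,y,z$ with $\mathcal{I}$ inserted in various slots, and then verify the Rota--Baxter identity on $A_{\mathcal{I}}$ directly. Your regrouping is in fact slightly more careful than the printed one, which omits the $\lambda J(x,\mathcal{I}y,z)$ group (and a factor of $\lambda$ in one term of the expansion), but these are only typographical slips in the paper and the arguments coincide.
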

	\begin{proof}
	Let $ x,y,z \in A$, then observe first that
			\begin{eqnarray}
			&&	(x\ast_{\mathcal{I}}y)\ast_{\mathcal{I}}z=(\mathcal{I}x \ast \mathcal{I}y)\ast z+(\mathcal{I}x\ast y)\ast \mathcal{I}z+(x\ast\mathcal{I}y)\ast\mathcal{I}z \nonumber \\
				&+&\lambda(x\ast y)\ast\mathcal{I} z+(\mathcal{I}x\ast y)\ast z +\lambda(x\ast \mathcal{I} y)\ast z+ \lambda^{2}(x\ast y)\ast z. \nonumber	
			\end{eqnarray}					 
	Next, the Jacobi identity in $(A,\ast_{\mathcal{I}})$ is deduced from the one in $(A,\ast)$ as it follows:
		\begin{eqnarray}					
			&&(x\ast_{\mathcal{I}}y)\ast_{\mathcal{I}}z+(y\ast_{\mathcal{I}}z)\ast_{\mathcal{I}}x+(z\ast_{\mathcal{I}} x)\ast_{\mathcal{I}}y\nonumber\\
			&=&\Big((\mathcal{I}(x)\ast\mathcal{I}(y))\ast z+(\mathcal{I}(y)\ast z)\ast\mathcal{I} (x)+(z\ast\mathcal{I}(x))\ast \mathcal{I}(y)\Big)\nonumber\\
			&+&\Big((\mathcal{I}(x)\ast y)\ast \mathcal{I}(z)+(y\ast\mathcal{I} (z))\ast \mathcal{I}(x)+(\mathcal{I}(z)\ast \mathcal{I}(x))\ast y\Big)\nonumber\\
			&+& \Big((x\ast\mathcal{I}(y))\ast\mathcal{I}(z)+(\mathcal{I}(y)\ast\mathcal{I}(z))\ast x+(\mathcal{I}(z)\ast x)\ast\mathcal{I}(y)\Big) \nonumber \\
			&+& \lambda\Big((x\ast y)\ast\mathcal{I}(z)+(y\ast \mathcal{I}(z))\ast x+(\mathcal{I}(z)\ast x)\ast y)\Big)\nonumber\\
			&+& \lambda\Big((\mathcal{I}(x)\ast y)\ast z+(y\ast z)\ast \mathcal{I}(x)+(z\ast \mathcal{I}(x))\ast y\Big) \nonumber\\
			&+& \lambda^{2}\Big((x\ast y)\ast z+(y\ast z)\ast x+(z\ast x)\ast y\Big)=0. \nonumber		
		\end{eqnarray}
	Next, it is obvious that
	\begin{eqnarray}
		\mathcal{I}(x\ast_{\mathcal{I}}y)=\mathcal{I}(x)\ast\mathcal{I}(y)\label{mra}
	\end{eqnarray}
	and finally, we obtain
	 \begin{eqnarray}
			\begin{split}			
			\mathcal{I}(x)\ast_{\mathcal{I}}\mathcal{I}(y)&=\mathcal{I}^{2}(x)\ast\mathcal{I}(y)+\mathcal{I}(x)\ast\mathcal{I}^{2}(y)+\lambda\mathcal{I}(x)\ast\mathcal{I}(y) \\ \nonumber
			&=\mathcal{I}\Big(\mathcal{I}(x)\ast_{\mathcal{I}}y+x\ast\mathcal{I}(y)+\lambda x\ast_{\mathcal{I}}y\Big) \mbox{ (by (\ref{mra}) ).}
		\end{split}		
		\end{eqnarray}
	It follows that	
		$(A_{\mathcal{I}},\mathcal{I})$	is a $\lambda$- weighted Rota-Baxter Jacobi-Jordan algebra and  $\mathcal{I}$ is a morphism of $\lambda$- weighted Rota-Baxter Jacobi-Jordan algebras from $(A_{\mathcal{I}},\mathcal{I})$ to $(A,\mathcal{I}).$
		 	\end{proof}	
	\begin{theorem}\label{rwra}
		Let $(V,\mathcal{T})$ be a representation of a $\lambda$-weighted Rota-Baxter Jacobi-Jordan algebra $(A,\mathcal{I})$.  Define the map $\bar{\rho}:A\rightarrow gl(V)$ by 
		\begin{eqnarray}
			\bar{\rho}(x)(u):=\rho(\mathcal{I}x)(u)+\rho(x)(\mathcal{T}(u))+\lambda\rho(x)(u), \forall x\in A, u\in V.\nonumber			
		\end{eqnarray}
		\begin{enumerate}
			\item[(i)]
			$\bar{\rho}$ satisfies $\mathcal{T}(\bar{\rho}(x)u)=\rho(\mathcal{I}x)\mathcal{T}(u)$ $\label{TN}$,
			\item [(ii)] $(\bar{V}=(V,\bar{\rho}),\mathcal{T})$ is a representation of the $\lambda$- weighted Rota-Baxter Jacobi-Jordan algebra $(A_{\mathcal{I}},\mathcal{I}).$		
		\end{enumerate}
	\end{theorem}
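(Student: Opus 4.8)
The plan is to verify the two claims in order, first establishing the compatibility identity in (i) and then using it, together with Proposition \ref{Po}, to check the single axiom (\ref{rpRJJa}) required for (ii).

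For part (i), I would simply expand $\mathcal{T}(\bar\rho(x)u)$ using the definition of $\bar\rho$, obtaining
$$\mathcal{T}\big(\rho(\mathcal{I}x)u+\rho(x)(\mathcal{T}u)+\lambda\rho(x)u\big),$$
which is exactly the right-hand side of the representation axiom (\ref{rpRJJa}) for $(V,\mathcal{T})$. Hence it equals $\rho(\mathcal{I}x)(\mathcal{T}u)$, which is (i). This step is immediate; no obstacle here.

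For part (ii), there are two things to establish: that $(V,\bar\rho)$ is a representation of the Jacobi-Jordan algebra $A_{\mathcal{I}}$ in the sense of (\ref{rpJJa}), and that $\mathcal{T}$ satisfies the weighted Rota-Baxter representation axiom (\ref{rpRJJa}) relative to the operator $\mathcal{I}$ acting on $A_{\mathcal{I}}$. For the first, I would compute $\bar\rho(x\ast_{\mathcal{I}}y)$ by substituting $x\ast_{\mathcal{I}}y=\mathcal{I}(x)\ast y+x\ast\mathcal{I}(y)+\lambda(x\ast y)$ into the definition of $\bar\rho$ and using that $\mathcal{I}(x\ast_{\mathcal{I}}y)=\mathcal{I}(x)\ast\mathcal{I}(y)$ from (\ref{mra}); then expand $-\bar\rho(x)\bar\rho(y)-\bar\rho(y)\bar\rho(x)$ term by term using the definition of $\bar\rho$, the identity $\rho(a\ast b)=-\rho(a)\rho(b)-\rho(b)\rho(a)$ for the underlying $\rho$, and crucially the compatibility identity $\mathcal{T}\bar\rho(x)u=\rho(\mathcal{I}x)\mathcal{T}u$ from (i) (and its consequences, e.g.\ $\rho(x)(\mathcal{T}\bar\rho(y)u)=\rho(x)\rho(\mathcal{I}y)\mathcal{T}u$) to collapse the many cross terms. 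Matching the two sides is a bookkeeping exercise.

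For the weighted Rota-Baxter axiom in (ii), one must show
$$\bar\rho(\mathcal{I}x)(\mathcal{T}u)=\mathcal{T}\big(\bar\rho(\mathcal{I}x)u+\bar\rho(x)(\mathcal{T}u)+\lambda\bar\rho(x)u\big),$$
where now the "algebra operator" is again $\mathcal{I}$ but acting on $A_{\mathcal{I}}$. Using part (i) with $x$ replaced by $\mathcal{I}x$, the left-hand side is $\rho(\mathcal{I}^2 x)(\mathcal{T}u)$; for the right-hand side I would expand each $\bar\rho$, apply part (i) repeatedly to pull $\mathcal{T}$ through, and also use the original axiom (\ref{rpRJJa}) for $(V,\mathcal{T})$ at the nodes where a $\mathcal{T}\rho(\mathcal{I}\,\cdot)\mathcal{T}$ pattern appears. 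I expect the main obstacle to be precisely this last verification: it is the step with the largest number of terms, and keeping track of which identity — (i), the original (\ref{rpRJJa}), or plain linearity — is applied at each term is where an error is most likely to creep in. A clean way to organize it is to first record the auxiliary identities $\mathcal{T}\bar\rho(x)=\rho(\mathcal{I}x)\mathcal{T}$ and $\bar\rho(\mathcal{I}x)\mathcal{T}=\mathcal{T}\bar\rho(\mathcal{I}x)+\mathcal{T}\rho(x)\mathcal{T}\mathcal{T}+\cdots$ as lemmas, then substitute. Once these bookkeeping steps are done, the result follows.
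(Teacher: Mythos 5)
Your part (i) and your plan for the Jacobi--Jordan axiom of $\bar\rho$ on $A_{\mathcal{I}}$ match the paper's proof: (i) is indeed just a restatement of (\ref{rpRJJa}), and the Jacobi check proceeds exactly as you describe, expanding $\bar\rho(x)\bar\rho(y)+\bar\rho(y)\bar\rho(x)$, using (i) to replace $\mathcal{T}(\bar\rho(y)u)$ by $\rho(\mathcal{I}y)\mathcal{T}u$, and collapsing the terms via (\ref{rpJJa}) and (\ref{mra}). That part of the proposal is sound.

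The problem is in your verification of the weighted Rota--Baxter axiom at the end. You assert that ``using part (i) with $x$ replaced by $\mathcal{I}x$, the left-hand side is $\rho(\mathcal{I}^2x)(\mathcal{T}u)$.'' That is not what (i) gives: (i) states $\mathcal{T}(\bar\rho(\mathcal{I}x)u)=\rho(\mathcal{I}^2x)(\mathcal{T}u)$, i.e.\ it rewrites the composite $\mathcal{T}\circ\bar\rho(\mathcal{I}x)$, whereas the left-hand side of the axiom is $\bar\rho(\mathcal{I}x)\circ\mathcal{T}$ applied to $u$ --- a different composite. By the definition of $\bar\rho$, the left-hand side is the three-term sum $\rho(\mathcal{I}^2x)(\mathcal{T}u)+\rho(\mathcal{I}x)(\mathcal{T}^2u)+\lambda\rho(\mathcal{I}x)(\mathcal{T}u)$; replacing it by its first term only makes the two sides disagree, so the verification as you describe it would fail. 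The correct route (the paper's) is simpler than the machinery you invoke: expand the left side by the definition of $\bar\rho$ into those three terms and apply (i) to each one, namely $\rho(\mathcal{I}^2x)(\mathcal{T}u)=\mathcal{T}(\bar\rho(\mathcal{I}x)u)$, $\rho(\mathcal{I}x)(\mathcal{T}(\mathcal{T}u))=\mathcal{T}(\bar\rho(x)(\mathcal{T}u))$ and $\lambda\rho(\mathcal{I}x)(\mathcal{T}u)=\lambda\mathcal{T}(\bar\rho(x)u)$; the sum is exactly $\mathcal{T}\big(\bar\rho(\mathcal{I}x)u+\bar\rho(x)(\mathcal{T}u)+\lambda\bar\rho(x)u\big)$. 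No further appeal to (\ref{rpRJJa}) or to any ``$\mathcal{T}\rho(\mathcal{I}\,\cdot)\,\mathcal{T}$ pattern'' is needed, and your proposed auxiliary lemma $\bar\rho(\mathcal{I}x)\mathcal{T}=\mathcal{T}\bar\rho(\mathcal{I}x)+\mathcal{T}\rho(x)\mathcal{T}\mathcal{T}+\cdots$ is (once written correctly as $\bar\rho(\mathcal{I}x)\mathcal{T}=\mathcal{T}\bar\rho(\mathcal{I}x)+\mathcal{T}\bar\rho(x)\mathcal{T}+\lambda\mathcal{T}\bar\rho(x)$) precisely the identity to be proved, so recording it as a preliminary lemma would be circular.
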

	\begin{proof}
	Obviously, we obtain the part (i) from (\ref{rpRJJa}). Next, to prove the part (ii), pick $x,y\in A$ and $u\in V$, then, first we have 
		\begin{eqnarray}						
		&&	\bar{\rho}(x)\bar{\rho}(y)(u)+ \bar{\rho}(y)\bar{\rho}(x)(u)\nonumber\\
		&=&\rho(\mathcal{I}x)\bar{\rho}(y)u+\rho(x)\mathcal{T}(\bar{\rho}(y)u)+\lambda\rho(x)\bar{\rho}(y)u
			+\rho(\mathcal{I}y)\bar{\rho}(x)u+\rho(y)\mathcal{T}(\bar{\rho}(x)u)+\lambda\rho(y)\bar{\rho}(x)u \nonumber\\
			&=&\rho(\mathcal{I}x)\rho(\mathcal{I}y)u+\rho(\mathcal{I}x)\rho(y)\mathcal{T}(u)+\lambda\rho(\mathcal{I}x)\rho(y)u+\rho(x)\rho(\mathcal{I}y)\mathcal{T}(u)
			+\lambda\rho(x)\rho(\mathcal{I}y)u\nonumber\\
			&&+\lambda\rho(x)\rho(y)\mathcal{T}(u)+\lambda^{2}\rho(x)\rho(y)u 
			+\rho(\mathcal{I}y)\rho(\mathcal{I}x)u+\rho(\mathcal{I}y)\rho(x)\mathcal{T}(u)+\lambda\rho(\mathcal{I}y)\rho(x)u\nonumber\\
			&&+
			\rho(y)\rho(\mathcal{I}x)\mathcal{T}(u)+\lambda\rho(y)\rho(\mathcal{I}x)u+\lambda\rho(y)\rho(x)\mathcal{T}(u)+\lambda^{2}\rho(y)\rho(x)u \nonumber\\
			&=&-\rho(\mathcal{I}x\ast \mathcal{I}y)u-\rho(\mathcal{I}x\ast y+x\ast \mathcal{I}y+\lambda x\ast y)\mathcal{T}(u)-\lambda\rho(\mathcal{I}x\ast y+x\ast \mathcal{I}y+\lambda x\ast y) \mbox{( by (\ref{rpJJa}) )}\nonumber\\
			&=&-\rho(\mathcal{I}(x\ast_{\mathcal{I}}y))u-\rho((x\ast_{\mathcal{I}}y)\mathcal{T}(u)-\lambda\rho(x\ast_{\mathcal{I}}y)u 
			=-\rho(x\ast_{\mathcal{I}}y)u.\nonumber
		\end{eqnarray}
		It follows that $\bar{V}=(V,\bar{\rho})$ is a representation of the Jacobi-Jordan algebra $A_{\mathcal{I}}.$  Next, we have:
		\begin{eqnarray}					
			\bar{\rho}(\mathcal{I}x)\mathcal{T}(u) &=& \rho(\mathcal{I}^{2}(x))\mathcal{T}(u)+\rho(\mathcal{I}x)\mathcal{T}^{2}(u)+\lambda\rho(\mathcal{I}x)\mathcal{T}(u) \nonumber \\
			&=& \mathcal{T}\Big(\bar{\rho}(\mathcal{I}x)(u)+\bar{\rho}(x)\mathcal{T}(u)+\lambda\bar{\rho}(x)(u)\Big)	\mbox{ ( from the part (i) ).}\nonumber		
		\end{eqnarray}
		Hence, $(\bar{V},\mathcal{T})$ is a representation of $(A_{\mathcal{I}},\mathcal{I}).$	
	\end{proof}
	\begin{remark}
		When considering the adjoint representation $(A,\mathcal{I})$ of the $\lambda$- weighted Rota-Baxter Jacobi-Jordan algebra $(A,\mathcal{I})$, the representation above in Theorem \ref{rwra} is the adjoint representation of the $\lambda$- weighted Rota-Baxter Jacobi-Jordan algebra $(A_{\mathcal{I}},{\mathcal{I}})$.
	\end{remark}
In the following, we will prove another relevant result that will be useful in the next section to construct
the cohomology of $\lambda$-weighted Rota-Baxter Jacobi-Jordan algebras.
	\begin{theorem} $\label{Ta}$
		Let $(V,\mathcal{T})$ be a representation of a $\lambda$- weighted Rota-Baxter Jacobi-Jordan algebra $(A,\mathcal{I})$.   Define the map 
		\begin{eqnarray}
			\tilde{\rho}:A\rightarrow gl(V), \tilde{\rho}(x)u:= \rho(\mathcal{I}x)u-\mathcal{T}(\rho(x)u), \forall x\in A, u\in V. \nonumber		
		\end{eqnarray}
		Then, $\tilde{\rho}$ defines a representation of the Jacobi-Jordan algebra $A_{\mathcal{I}}$ on $V.$ Moreover,
		 $$(\tilde{V}=(V,\tilde{\rho}),\mathcal{T})$$ is a representation of the $\lambda$- weighted Rota-Baxter Jacobi-Jordan algebra $(A_{\mathcal{I}},\mathcal{I}).$	
	\end{theorem}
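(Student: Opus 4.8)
The plan is to verify the two requirements in turn, following the same pattern as the proof of Theorem~\ref{rwra}: first that $(V,\tilde\rho)$ is a representation of the Jacobi-Jordan algebra $A_{\mathcal{I}}$, i.e.\ that $\tilde\rho(x\ast_{\mathcal{I}}y)=-\tilde\rho(x)\tilde\rho(y)-\tilde\rho(y)\tilde\rho(x)$ for all $x,y\in A$; and then that the pair $(\tilde V,\mathcal{T})$ satisfies the compatibility~(\ref{rpRJJa}) relative to the $\lambda$-weighted Rota-Baxter Jacobi-Jordan algebra $(A_{\mathcal{I}},\mathcal{I})$ furnished by Proposition~\ref{Po}(ii). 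Note that the embedding-type shortcut (the Theorem after Proposition~\ref{sdpr} combined with Proposition~\ref{Po}) produces the twisted action $\bar\rho$ of Theorem~\ref{rwra}, not $\tilde\rho$, so here a direct computation seems unavoidable.

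For the first requirement I would expand $\tilde\rho(x)\tilde\rho(y)(u)$ using only the definition of $\tilde\rho$; this yields four terms, of which the only awkward one is $\rho(\mathcal{I}x)\mathcal{T}(\rho(y)u)$. Rewriting this term with~(\ref{rpRJJa}) applied to the vector $\rho(y)u\in V$, namely as $\mathcal{T}\bigl(\rho(\mathcal{I}x)\rho(y)u+\rho(x)\mathcal{T}(\rho(y)u)+\lambda\rho(x)\rho(y)u\bigr)$, produces a cancellation with the term $\mathcal{T}(\rho(x)\mathcal{T}(\rho(y)u))$, leaving
\[
\tilde\rho(x)\tilde\rho(y)(u)=\rho(\mathcal{I}x)\rho(\mathcal{I}y)u-\mathcal{T}\bigl(\rho(\mathcal{I}x)\rho(y)u+\rho(x)\rho(\mathcal{I}y)u+\lambda\rho(x)\rho(y)u\bigr).
\]
Symmetrizing in $x$ and $y$ and applying the module identity~(\ref{rpJJa}) to each of the symmetric pairs $\rho(\mathcal{I}x)\rho(\mathcal{I}y)+\rho(\mathcal{I}y)\rho(\mathcal{I}x)$, $\rho(\mathcal{I}x)\rho(y)+\rho(y)\rho(\mathcal{I}x)$, $\rho(x)\rho(\mathcal{I}y)+\rho(\mathcal{I}y)\rho(x)$ and $\rho(x)\rho(y)+\rho(y)\rho(x)$ collapses the result to $-\rho(\mathcal{I}x\ast\mathcal{I}y)u+\mathcal{T}\bigl(\rho(\mathcal{I}x\ast y+x\ast\mathcal{I}y+\lambda x\ast y)u\bigr)$. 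Using $\mathcal{I}x\ast y+x\ast\mathcal{I}y+\lambda x\ast y=x\ast_{\mathcal{I}}y$ together with $\mathcal{I}x\ast\mathcal{I}y=\mathcal{I}(x\ast_{\mathcal{I}}y)$ (the Rota-Baxter identity~(\ref{ro}), equivalently~(\ref{mra})), this is precisely $-\tilde\rho(x\ast_{\mathcal{I}}y)(u)$, so $(V,\tilde\rho)$ is a representation of $A_{\mathcal{I}}$.

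For the second requirement I would expand both sides of the identity to be proved,
\[
\tilde\rho(\mathcal{I}x)(\mathcal{T}u)=\mathcal{T}\bigl(\tilde\rho(\mathcal{I}x)u+\tilde\rho(x)(\mathcal{T}u)+\lambda\tilde\rho(x)u\bigr),
\]
directly from $\tilde\rho(z)v=\rho(\mathcal{I}z)v-\mathcal{T}(\rho(z)v)$. On the left-hand side the two terms $\rho(\mathcal{I}^{2}x)\mathcal{T}u$ and $\mathcal{T}(\rho(\mathcal{I}x)\mathcal{T}u)$ are rewritten by means of~(\ref{rpRJJa})—the first with $x$ replaced by $\mathcal{I}x$, the second applied directly—after which both sides are seen to equal $\mathcal{T}$ applied to the same six-term combination of $\rho(\mathcal{I}^{2}x)u$, $\rho(\mathcal{I}x)\mathcal{T}u$, $\rho(\mathcal{I}x)u$, $\mathcal{T}(\rho(\mathcal{I}x)u)$, $\mathcal{T}(\rho(x)\mathcal{T}u)$ and $\mathcal{T}(\rho(x)u)$, hence they agree. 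One could instead invoke part~(i) of Theorem~\ref{rwra} (which records $\mathcal{T}(\bar\rho(x)u)=\rho(\mathcal{I}x)\mathcal{T}(u)$) to shorten this step, but the direct check is immediate.

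All the computations are mechanical; the only point demanding care is the bookkeeping in the first step, where one must match each operator pair to the correct instance of~(\ref{rpJJa}) and keep track of the $\lambda$-terms. The structural reason it works is that the Rota-Baxter identity forces the ``$\rho(\mathcal{I}\,\cdot\,)$-part'' and the ``$\mathcal{T}\rho(\,\cdot\,)$-part'' of the calculation to reassemble exactly into $\tilde\rho$ evaluated at the twisted product $x\ast_{\mathcal{I}}y$.
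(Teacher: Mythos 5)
Your proposal is correct and follows essentially the same route as the paper: expand $\tilde\rho(x)\tilde\rho(y)u$, use (\ref{rpRJJa}) on the term $\rho(\mathcal{I}x)\mathcal{T}(\rho(y)u)$ to trigger the cancellation, symmetrize and apply (\ref{rpJJa}) together with $\mathcal{I}(x\ast_{\mathcal{I}}y)=\mathcal{I}x\ast\mathcal{I}y$, and then verify the $\mathcal{T}$-compatibility by applying (\ref{rpRJJa}) to both terms of $\tilde\rho(\mathcal{I}x)(\mathcal{T}u)$. No gaps.
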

	\begin{proof} 
		Let $x,y\in A$ and $u\in V$, then, we have:
		\begin{eqnarray}			
			&&\tilde{\rho}(x)\tilde{\rho}(y)u+	\tilde{\rho}(y)\tilde{\rho}(x)u
			= \tilde{\rho}(x)\Big(\rho(\mathcal{I}y)u)-\mathcal{T}(\rho(y)u\Big)+\tilde{\rho}(y)\Big(\rho(\mathcal{I}x)u)-\mathcal{T}(\rho(x)u\Big)\nonumber\\
			&&=\rho(\mathcal{I}x)\Big(\rho(\mathcal{I}y)u-\mathcal{T}(\rho(y)u)\Big)-\mathcal{T}\Big(\rho(x)\rho(\mathcal{I}y)u-\rho(x)\mathcal{T}(\rho(y)u)\Big)
			+\rho(\mathcal{I}y)\Big(\rho(\mathcal{I}x)u\nonumber\\
			&&-\mathcal{T}(\rho(x)u)\Big)-\mathcal{T}\Big(\rho(y)\rho(\mathcal{I}x)u-\rho(y)\mathcal{T}(\rho(x)u)\Big)
			=\rho(\mathcal{I}x)\rho(\mathcal{I}y)u-\mathcal{T}\Big(\rho(\mathcal{I}x)\rho(y)u
			\nonumber\\
			&&+\cancel{\rho(x)\mathcal{T}(\rho(y)u)}+\lambda\rho(x)\rho(y)u\Big)
			-\mathcal{T}\Big(\rho(x)\rho(\mathcal{I}(y))u-\cancel{\rho(x)\mathcal{T}\rho(y)u}\Big) 
			+\rho(\mathcal{I}y)\rho(\mathcal{I}x)u
			\nonumber\\
			&& 
			-\mathcal{T}\Big(\rho(\mathcal{I}y)\rho(x)u+\cancel{\rho(y)\mathcal{T}(\rho(x)u)}+\lambda\rho(y)\rho(x)u\Big)
			-\mathcal{T}\Big(\rho(y)\rho(\mathcal{I}x)(u)-\cancel{\rho(y)\mathcal{T}\rho(x)u}\Big) \mbox{ ( by (\ref{rpRJJa}) )} \nonumber\\
			&&=\Big(\rho(\mathcal{I}x)\rho(\mathcal{I}y)u+\rho(\mathcal{I}y)\rho(\mathcal{I}x)u\Big) -\mathcal{T}\Big(\rho(\mathcal{I}x)\rho(y)u+\lambda\rho(x)\rho(y)u+\rho(x)\rho(\mathcal{I}y)u\Big) \nonumber\\
			&&-\mathcal{T}\Big(\rho(\mathcal{I}y)\rho(x)u+\lambda\rho(y)\rho(x)u+\rho(y)\rho(\mathcal{I}x)u\Big) 
			=-\rho(\mathcal{I}x\ast \mathcal{I}y)u+\mathcal{T}(\rho(x\ast_{\mathcal{I}} y)u) \mbox{( by (\ref{rpJJa}) and (\ref{TO}))}\nonumber\\
			&&=-\rho(\mathcal{I}(x\ast_{\mathcal{I}} y))u+\mathcal{T}(\rho(x\ast_{\mathcal{I}} y)u) 
			=-\tilde{\rho}(x\ast_{\mathcal{I}}y)u. \nonumber	
		\end{eqnarray}
		Therefore, $\tilde{\rho}$ is a representation of $A_{\mathcal{I}}.$ Next, we compute 
		\begin{eqnarray}			
		&&	\tilde{\rho}(\mathcal{I}x)\mathcal{T}(u)\nonumber\\
		&&=\rho(\mathcal{I}^{2}(x))(\mathcal{T}u)-\mathcal{T}(\rho(\mathcal{I}x)(\mathcal{T}u)) \nonumber \\
			&&=\mathcal{T}\Big(\rho(\mathcal{I}^2(x))u+\rho(\mathcal{I}x)(\mathcal{T}u)+\lambda\rho(\mathcal{I}x)u\Big)-\mathcal{T}^{2}\Big(\rho(\mathcal{I}x)u+\rho(x)(\mathcal{T}u)+\lambda\rho(x)u\Big) \mbox{ ( by (\ref{rpRJJa}) )}\nonumber\\
			&&=\mathcal{T}\Big(\rho(\mathcal{I}^{2}(x))u-\mathcal{T}(\rho(\mathcal{I}x)u)\Big)+\mathcal{T}\Big(\rho(\mathcal{I}x)(\mathcal{T}u)-\mathcal{T}(\rho(x)(\mathcal{T}u))\Big)+\mathcal{T}\Big(\lambda\rho(\mathcal{I}x)u)-\lambda\mathcal{T}(\rho(x)u)\Big)\nonumber\\
			&&=\mathcal{T}\Big(\tilde{\rho}(\mathcal{I}x)u+\tilde{\rho}(x)\mathcal{T}u+\lambda\tilde{\rho}(x)u\Big).\nonumber
		\end{eqnarray}	
		This ends of the proof.		
	\end{proof}
		\begin{proposition}
		Let $(A,\mathcal{I})$ be a $\lambda$-weighted  Rota-Baxter Jacobi-Jordan algebra  such that $\mathcal{I}$ is quasi-idempotent i.e $\mathcal{I}^{2}=-\mathcal{I}$. Then the following result holds:
		\begin{eqnarray}
			\mathcal{I}(x)\ast\mathcal{I}(y)=-\mathcal{I} (x\ast y), \forall x,y \in ImT.		
		\end{eqnarray}
		When $\mathcal{I}$ is idempotent i.e $\mathcal{I}^{2}=\mathcal{I}$ and $\lambda=-1$, then $\mathcal{I}$ is a morphism from $Im\mathcal{I}$ to $Im\mathcal{I}$.
	\end{proposition}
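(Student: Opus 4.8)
The plan is to reduce everything to one elementary property of Rota-Baxter operators, valid for any weight: the image $Im\mathcal{I}$ is a Jacobi-Jordan subalgebra of $(A,\ast)$. Indeed, for arbitrary $x,y\in A$ the identity (\ref{ro}) exhibits $\mathcal{I}(x)\ast\mathcal{I}(y)$ as $\mathcal{I}$ evaluated at $\mathcal{I}(x)\ast y+x\ast\mathcal{I}(y)+\lambda(x\ast y)$, so $\mathcal{I}(x)\ast\mathcal{I}(y)\in Im\mathcal{I}$; since every element of $Im\mathcal{I}$ is of the form $\mathcal{I}(x)$, this gives $Im\mathcal{I}\ast Im\mathcal{I}\subseteq Im\mathcal{I}$, and commutativity and the Jacobi identity (\ref{jji}) are inherited from $A$.

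For the quasi-idempotent case I would then record how $\mathcal{I}$ acts on this subalgebra: if $z=\mathcal{I}(w)\in Im\mathcal{I}$, then $\mathcal{I}(z)=\mathcal{I}^{2}(w)=-\mathcal{I}(w)=-z$, so $\mathcal{I}$ restricts to $-id$ on $Im\mathcal{I}$. Now take $x,y\in Im\mathcal{I}$; by the first step $x\ast y\in Im\mathcal{I}$, hence $\mathcal{I}(x\ast y)=-(x\ast y)$, while $\mathcal{I}(x)=-x$ and $\mathcal{I}(y)=-y$, so $\mathcal{I}(x)\ast\mathcal{I}(y)=(-x)\ast(-y)=x\ast y=-\mathcal{I}(x\ast y)$, which is the asserted identity. (Feeding $\mathcal{I}(x)=-x$, $\mathcal{I}(y)=-y$ directly into (\ref{ro}) also yields $x\ast y=(\lambda-2)\mathcal{I}(x\ast y)$; together with $\mathcal{I}(x\ast y)=-(x\ast y)$ this forces $(\lambda-1)(x\ast y)=0$ on $Im\mathcal{I}$, a consistent by-product worth a brief remark but irrelevant to the conclusion.)

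The idempotent case is the same argument with the opposite sign: $Im\mathcal{I}$ is again a subalgebra, and for $z=\mathcal{I}(w)\in Im\mathcal{I}$ one has $\mathcal{I}(z)=\mathcal{I}^{2}(w)=\mathcal{I}(w)=z$, so $\mathcal{I}$ restricts to the identity on $Im\mathcal{I}$. Hence for $x,y\in Im\mathcal{I}$ we get $\mathcal{I}(x\ast y)=x\ast y=\mathcal{I}(x)\ast\mathcal{I}(y)$, i.e.\ $\mathcal{I}$ is an algebra morphism from $Im\mathcal{I}$ to $Im\mathcal{I}$ (in fact the restriction $\lambda=-1$ is not needed for this step).

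I do not anticipate a real obstacle: the entire proof rests on the closure $Im\mathcal{I}\ast Im\mathcal{I}\subseteq Im\mathcal{I}$, which is immediate from (\ref{ro}), and the rest is a one-line computation using $\mathcal{I}^{2}=\pm\mathcal{I}$. The only point requiring a moment's attention is the relation $(\lambda-1)(x\ast y)=0$ that shows up in the quasi-idempotent case, which should be acknowledged but does not affect the statement.
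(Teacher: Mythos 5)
Your proof is correct, but it takes a different route from the paper's. The paper works with arbitrary preimages $x=\mathcal{I}(z)$, $y=\mathcal{I}(t)$, plugs $\mathcal{I}(z),\mathcal{I}(t)$ into the Rota--Baxter identity (\ref{ro}), and simplifies the argument of the outer $\mathcal{I}$ using the (quasi-)idempotency; you instead first record that $Im\mathcal{I}$ is a subalgebra and that $\mathcal{I}$ restricts to $\mp id_{Im\mathcal{I}}$, after which both sides of the asserted identity are computed without ever invoking (\ref{ro}) again. Your route is more elementary and, importantly, more transparent: it yields exactly the displayed conclusion $\mathcal{I}(x)\ast\mathcal{I}(y)=-\mathcal{I}(x\ast y)$ under the stated hypothesis $\mathcal{I}^{2}=-\mathcal{I}$, whereas the paper's computation actually lands on $-\lambda\,\mathcal{I}(x\ast y)$ (its simplification $\mathcal{I}^{2}(z)\ast\mathcal{I}(t)+\mathcal{I}(z)\ast\mathcal{I}^{2}(t)+\lambda\mathcal{I}(z)\ast\mathcal{I}(t)=-\lambda\,\mathcal{I}(z)\ast\mathcal{I}(t)$ is only valid for the convention $\mathcal{I}^{2}=-\lambda\mathcal{I}$; under $\mathcal{I}^{2}=-\mathcal{I}$ one gets the factor $\lambda-2$ instead). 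Your by-product $(\lambda-1)(x\ast y)=0$ on $Im\mathcal{I}$ is exactly what reconciles these various coefficients, and it also reveals that for $\lambda\neq 1$ the product on $Im\mathcal{I}$ vanishes identically, so the identity is degenerate there --- a point the paper does not mention. Your observation that $\lambda=-1$ is not needed for the idempotent case is likewise correct (and for $\lambda\neq-1$ the analogous constraint $(\lambda+1)(x\ast y)=0$ again trivializes the claim).
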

	\begin{proof}
	Suppose that $\mathcal{I}$ is quasi-idempotent	and let $x=\mathcal{I}(z), y=\mathcal{I}(t)$ with $z,t \in A$. Then we have
		\begin{eqnarray}
			&&\mathcal{I}(x)\ast \mathcal{I}(y)=\mathcal{I}(\mathcal{I}(z))\ast \mathcal{I}(\mathcal{I}(t))=
			\mathcal{I}( \mathcal{I}(z)\ast \mathcal{I}^2(t)+\mathcal{I}^2(z)\ast\mathcal{I}(t)+\lambda \mathcal{I}(z)\ast \mathcal{I}(t) ) \mbox{( by (\ref{ro}) )}\nonumber\\
			&&=\mathcal{I}( -\lambda\mathcal{I}(z)\ast \mathcal{I}(t) )=-\lambda\mathcal{I}(x\ast y).\nonumber
		\end{eqnarray}
		 When $\mathcal{I}^{2}=\mathcal{I}$ and $\lambda=-1$ we obtain automatically $\mathcal{I}(x)\ast \mathcal{I}(y)=\mathcal{I}(x\ast y), \forall x,y \in Im\mathcal{I}$.	
	\end{proof}	
	\begin{proposition}
		Let $(V,\rho)$ be a representation of a 0-weighted Rota-Baxter Jacobi-Jordan algebra $(A,\mathcal{I})$, $V^{\ast}$ the dual of $V$. Define  linear maps $\rho^{\ast}: A\longrightarrow gl(V^{\ast})$ and $\mathcal{T}^{\ast}: V^{\ast}\rightarrow V^{\ast}$ by: 
		\begin{eqnarray}
			(\rho^{\ast}(x)\theta)(u):=\theta\Big(\rho(x)u\Big) \mbox{ and }(\mathcal{T}^{\ast} \theta)(u):=-\theta(\mathcal{T}u), \forall x\in A, \forall u\in V, \forall \theta \in V^{\ast}.		
		\end{eqnarray}
	Then, $(V^{\ast},\rho^{\ast},\mathcal{T}^{\ast})$ is a representation of $(A,\mathcal{I})$.		
	\end{proposition}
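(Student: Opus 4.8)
The plan is to verify directly, from the definitions, the two conditions that make $(V^{\ast},\rho^{\ast},\mathcal{T}^{\ast})$ a representation of the $0$-weighted Rota-Baxter Jacobi-Jordan algebra $(A,\mathcal{I})$: first that $(V^{\ast},\rho^{\ast})$ is a representation of the Jacobi-Jordan algebra $A$ in the sense of (\ref{rpJJa}), and then that $\mathcal{T}^{\ast}$ satisfies (\ref{rpRJJa}) with $\lambda=0$. Throughout, every functional is evaluated on an arbitrary $u\in V$.

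First I would show $\rho^{\ast}$ satisfies (\ref{rpJJa}). For $x,y\in A$ and $\theta\in V^{\ast}$ one computes $(\rho^{\ast}(x)\rho^{\ast}(y)\theta)(u)=(\rho^{\ast}(y)\theta)(\rho(x)u)=\theta(\rho(y)\rho(x)u)$, and symmetrically $(\rho^{\ast}(y)\rho^{\ast}(x)\theta)(u)=\theta(\rho(x)\rho(y)u)$. Hence $\Big(\big(-\rho^{\ast}(x)\rho^{\ast}(y)-\rho^{\ast}(y)\rho^{\ast}(x)\big)\theta\Big)(u)=-\theta\big(\rho(x)\rho(y)u+\rho(y)\rho(x)u\big)$, which by (\ref{rpJJa}) applied to $\rho$ equals $\theta\big(\rho(x\ast y)u\big)=(\rho^{\ast}(x\ast y)\theta)(u)$. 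Note that the sign already built into (\ref{rpJJa}) is exactly what makes the transpose map $\rho^{\ast}(x)$ work as a representation without any extra sign.

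Next I would check (\ref{rpRJJa}) for the triple $(V^{\ast},\rho^{\ast},\mathcal{T}^{\ast})$ with weight $0$, that is, $\rho^{\ast}(\mathcal{I}x)(\mathcal{T}^{\ast}\theta)=\mathcal{T}^{\ast}\big(\rho^{\ast}(\mathcal{I}x)\theta+\rho^{\ast}(x)(\mathcal{T}^{\ast}\theta)\big)$. Evaluating the left-hand side at $u$ gives $(\mathcal{T}^{\ast}\theta)(\rho(\mathcal{I}x)u)=-\theta\big(\mathcal{T}(\rho(\mathcal{I}x)u)\big)$. Evaluating the right-hand side at $u$ and unfolding the definitions of $\rho^{\ast}$ and $\mathcal{T}^{\ast}$ gives $-\theta\big(\rho(\mathcal{I}x)(\mathcal{T}u)\big)+\theta\big(\mathcal{T}(\rho(x)(\mathcal{T}u))\big)$. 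Applying (\ref{rpRJJa}) for $(V,\mathcal{T})$ with $\lambda=0$, namely $\rho(\mathcal{I}x)(\mathcal{T}u)=\mathcal{T}(\rho(\mathcal{I}x)u)+\mathcal{T}(\rho(x)(\mathcal{T}u))$, the two copies of $\theta\big(\mathcal{T}(\rho(x)(\mathcal{T}u))\big)$ cancel and the right-hand side collapses to $-\theta\big(\mathcal{T}(\rho(\mathcal{I}x)u)\big)$, which matches the left-hand side. Since $u$ and $\theta$ were arbitrary, the identity holds.

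The whole argument is a short computation; the only delicate point is keeping track of the signs introduced by $\rho^{\ast}$ and $\mathcal{T}^{\ast}$, and noticing that the final cancellation genuinely relies on $\lambda=0$ — for a general weight the ``naive'' dual map $\mathcal{T}^{\ast}$ would have to be modified, which is why the statement is confined to the weight-zero setting. I expect this sign-bookkeeping together with the $\lambda=0$ cancellation to be the main (if minor) obstacle.
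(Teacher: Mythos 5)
Your proof is correct and follows essentially the same computation as the paper: evaluate both sides of the weight-zero condition (\ref{rpRJJa}) for $(\rho^{\ast},\mathcal{T}^{\ast})$ on an arbitrary $u$, unfold the definitions, and cancel using (\ref{rpRJJa}) for $(V,\mathcal{T})$. The only difference is that you verify directly that $(V^{\ast},\rho^{\ast})$ satisfies (\ref{rpJJa}) (a correct and welcome addition), whereas the paper simply cites a reference for that fact.
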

	\begin{proof}
		It is proved that $(V^{\ast},\rho^{\ast})$ is a representation of the Jacobi-Jordan algebra $(A,\ast)$ \cite{oa}. Next, pick, $ x\in A,  u\in V$ and $\theta \in V^{\ast}$, then we have
		\begin{eqnarray}						
		&&	\Big[(\rho^{\ast}(\mathcal{I}x)(\mathcal{T}^{\ast}\theta)-\mathcal{T}^{\ast}\Big(\rho^{\ast}(\mathcal{I}x)\theta+\rho^{\ast}(x)(\mathcal{T}^{\ast}\theta)\Big)\Big](u)\nonumber\\
			&&=(\mathcal{T}^{\ast}\theta)\Big(\rho(\mathcal{I}x)u)\Big)+(\rho^{\ast}(\mathcal{I}x)\theta)(\mathcal{T}u)-(\rho^{\ast}(x)\mathcal{T}^{\ast}\theta)(\mathcal{T}u)\nonumber\\
			&&=-\theta\Big(\mathcal{T}(\rho(\mathcal{I}x)u)\Big)+\theta\Big(\rho(\mathcal{I}x)\mathcal{T}u\Big)-\theta\Big(\mathcal{T}(\rho(x)\mathcal{T}u)\Big)\nonumber\\
			&&=-\theta\Big(\mathcal{T}(\rho(\mathcal{I}x)u)\Big)+\theta\Big[\mathcal{T}\Big(\rho(\mathcal{I}x)u+\rho(x)(\mathcal{T}u)\Big)\Big]-\theta\Big(\mathcal{T}(\rho(x)\mathcal{T}u)\Big) \mbox{ ( by (\ref{rpRJJa}) )}\nonumber\\
			&&=0.\nonumber
		\end{eqnarray}
		This ends the proof.
	\end{proof}
	\section{Rota-Baxter paired operators}\label{s3}
	Let $(A,\ast)$ be a Jacobi-Jordan algebra and $(V, \rho)$ be a representation of it. In this section, weighted
	Rota-Baxter paired operators that are related to weighted Rota-Baxter Jacobi-Jordan algebras together with their representations, are introduced. After giving some results, a characterization of these operators is given.
	\begin{definition} $\label{D}$
		Let $(A,\ast)$ be a Jacobi-Jordan algebra, $(V, \rho)$ be a representation of it and $\lambda \in \mathbb{K}$. A pair $(\mathcal{I},\mathcal{T})$ of linear maps $\mathcal{I}: A \longrightarrow A$, $\mathcal{T}:V \longrightarrow V$ is called a $\lambda$-weighted Rota-Baxter paired operators if the following conditions  hold:
		\begin{eqnarray}
		&&	\mathcal{I}(x)\ast\mathcal{I}(y)=\mathcal{I}\Big(\mathcal{I}x\ast y+x\ast\mathcal{I}y+\lambda (x\ast y)\Big), \forall x,y \in A,\label{An} \\
		&&	\rho(\mathcal{I}x)\mathcal{T}(u)=\mathcal{T}\Big(\rho(\mathcal{I}x)(u)+\rho(x)\mathcal{T}(u)+\lambda\rho(x)(u)\Big), \forall x\in A, u\in V. \label{Am}				
		\end{eqnarray}
		\begin{remark} $\label{R}$
			Observe that the identity (\ref{An}) shows that the pair $(A,\mathcal{I})$ must be a $\lambda$-weighted Rota-Baxter Jacobi-Jordan algebra while the identity (\ref{Am}) is equivalent to say that $(V,\mathcal{T})$ is a representation of $(A,\mathcal{I}).$	
		\end{remark}	
	\end{definition}
	According to Remark \ref{R}, we get the following result:
	\begin{proposition} \label{P}
		Let $(V,\rho)$ be a representation of a  Jacobi-Jordan algebra $(A,\ast)$, $\lambda \in \mathbb{K}$ and $\mathcal{I}:A \longrightarrow A$, $\mathcal{T}:V \longrightarrow V$ be linear maps. Then the pair $(\mathcal{I},\mathcal{T})$ is a $\lambda$-weighted Rota-Baxter paired operators if and only if \begin{enumerate}
			\item The pair $(A, \mathcal{I})$ is a $\lambda$-weighted Rota-Baxter Jacobi-Jordan algebra.
			\item The pair $(V,\mathcal{T})$ is a representation of $(A, \mathcal{I}).$	
		\end{enumerate}		
	\end{proposition}
	\begin{proof}
		It follows by Definition \ref{D}.	
	\end{proof}
\begin{proposition}\label{ds}
Let $(A,\ast)$ be a Jacobi-Jordan algebra. Then, the  direct sum $A\oplus A$  carries a Jacobi-Jordan algebra structure with the map $\ast_{\lambda}$ given by
\begin{eqnarray}
	(x,x')\ast_{\lambda}(y,y')= (x\ast y,x\ast y'+x\ast 
	y'+\lambda (x'\ast y')),\ \forall x,x',y,y'\in A.
\end{eqnarray}
We denote this algebra by $(A\oplus A)_{\lambda}.$ Furthermore, if $ (V, \rho)$ is a representation of the Jacobi-Jordan algebra $(A,\ast)$, then $V\oplus V$ can be endowed with a representation of the Jacobi-Jordan algebra $(A\oplus A)_{\lambda}$ with the action $\rho_{\lambda}$ given by 
\begin{eqnarray}
	\rho_{\lambda}(x,y)(u,v):= (\rho(x)u,\rho(x)v+\rho(y)u+\lambda\rho(y)v), \forall x,y \in A, u,v \in V.
\end{eqnarray}
\end{proposition}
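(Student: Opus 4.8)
The plan is to deduce both halves of the statement from one structural remark. Let $R_{\lambda}:=\mathbb{K}[t]/(t^{2}-\lambda t)$, the two-dimensional commutative associative unital $\mathbb{K}$-algebra with basis $\{1,t\}$ and relation $t^{2}=\lambda t$ (the dual numbers for $\lambda=0$, and $\mathbb{K}\times\mathbb{K}$ for $\lambda\neq 0$). I claim that
\[
\Phi:(A\oplus A)_{\lambda}\longrightarrow A\otimes R_{\lambda},\qquad \Phi(x,x'):=x\otimes 1+x'\otimes t ,
\]
is a linear isomorphism carrying $\ast_{\lambda}$ to the product $(a\otimes r)\bullet(b\otimes s):=(a\ast b)\otimes rs$ on $A\otimes R_{\lambda}$, and that $\Psi:V\oplus V\to V\otimes R_{\lambda}$, $\Psi(u,u'):=u\otimes 1+u'\otimes t$, is a linear isomorphism intertwining $\rho_{\lambda}$ with the natural action $\varrho(a\otimes r)(v\otimes s):=\rho(a)v\otimes rs$. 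Granting these, the proposition reduces to the general fact that for any commutative associative $\mathbb{K}$-algebra $R$, the space $A\otimes R$ is a Jacobi-Jordan algebra under $\bullet$ and $V\otimes R$ is a representation of it under $\varrho$.

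First I would establish that general fact. Commutativity of $\bullet$ is immediate from commutativity of $\ast$ and of $R$. For the Jacobi identity one expands, for simple tensors,
\[
\big((a\otimes r)\bullet(b\otimes s)\big)\bullet(c\otimes u)=\big((a\ast b)\ast c\big)\otimes (rs)u ,
\]
and, since $R$ is commutative and associative, the $R$-factor here and in the two cyclic permutates equals the common value $rsu$; hence the cyclic sum is $\big((a\ast b)\ast c+(b\ast c)\ast a+(c\ast a)\ast b\big)\otimes rsu=0$ by (\ref{jji}), and bilinearity promotes this to all of $A\otimes R$. Likewise, the identity (\ref{rpJJa}) for $(V\otimes R,\varrho)$, evaluated on simple tensors and after extracting the common factor $rsu\in R$, is exactly (\ref{rpJJa}) for $(V,\rho)$; so $(V\otimes R,\varrho)$ is a representation of $A\otimes R$.

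It then remains to verify the two identifications with $R=R_{\lambda}$. For $\Phi$ one computes
\[
\Phi(x,x')\bullet\Phi(y,y')=(x\ast y)\otimes 1+(x\ast y'+x'\ast y)\otimes t+(x'\ast y')\otimes t^{2},
\]
and the substitution $t^{2}=\lambda t$ turns the right-hand side into $\Phi\big((x,x')\ast_{\lambda}(y,y')\big)$ --- here I read the apparent typo in the displayed formula for $\ast_{\lambda}$, whose second component should be $x\ast y'+x'\ast y+\lambda(x'\ast y')$, in agreement with the formula given for $\rho_{\lambda}$. For $\Psi$ the analogous expansion of $\varrho\big(\Phi(x,y)\big)\big(\Psi(u,v)\big)$ together with $t^{2}=\lambda t$ yields $\Psi\big(\rho_{\lambda}(x,y)(u,v)\big)$. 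Since $\Phi$ and $\Psi$ are linear bijections, the Jacobi-Jordan structure on $A\otimes R_{\lambda}$ and its representation on $V\otimes R_{\lambda}$ transport back to $(A\oplus A)_{\lambda}$ and $V\oplus V$, which is the assertion.

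\textbf{Expected main obstacle.} There is no conceptual difficulty; the only risk is bookkeeping. A direct proof not using the tensor reformulation would instead require expanding the cyclic sum $\sum_{\mathrm{cyc}}\big((x,x')\ast_{\lambda}(y,y')\big)\ast_{\lambda}(z,z')$, whose second slot has seven summands carrying the weights $1,\lambda,\lambda^{2}$, and recognising that it vanishes after several applications of (\ref{jji}) to triples drawn from $\{x,x',y,y',z,z'\}$; the tensor picture is exactly what makes this automatic. I would also double-check the placement and signs in $\ast_{\lambda}$ against those in $\rho_{\lambda}$ before finalising, since the product appears to be mistyped in the statement.
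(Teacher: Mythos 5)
Your proposal is correct, but it proves the proposition by a genuinely different route from the paper. The paper's proof is a brute-force verification: it expands the cyclic sum $\sum_{\mathrm{cyc}}\big((x,x')\ast_{\lambda}(y,y')\big)\ast_{\lambda}(z,z')$ into its seven groups of terms weighted by $1,\lambda,\lambda^{2}$ and observes that each group vanishes by the Jacobi identity in $(A,\ast)$, and then does the analogous expansion for $\rho_{\lambda}$ using (\ref{rpJJa}). You instead identify $(A\oplus A)_{\lambda}$ with $A\otimes R_{\lambda}$ for $R_{\lambda}=\mathbb{K}[t]/(t^{2}-\lambda t)$ and invoke the general principle that tensoring with a commutative associative algebra preserves any class of algebras defined by multilinear identities; since commutativity and the Jacobi identity (and the representation identity (\ref{rpJJa})) are multilinear, checking on simple tensors suffices, and the isomorphisms $\Phi$, $\Psi$ transport the structure back. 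Your computation of $\Phi(x,x')\bullet\Phi(y,y')$ and of $\varrho(\Phi(x,y))(\Psi(u,v))$ is right, and you correctly diagnose the typo in the displayed product (the second component should read $x\ast y'+x'\ast y+\lambda(x'\ast y')$, as confirmed by the paper's own expansion and by the later semidirect product formula). What your approach buys is brevity, an explanation of where the formula comes from (base change along $\mathbb{K}\to R_{\lambda}$, i.e.\ dual numbers for $\lambda=0$ and $\mathbb{K}\times\mathbb{K}$ otherwise), and the fact that the next proposition of the paper, on $(A\oplus A\oplus V\oplus V)_{\lambda}$, becomes an instance of the semidirect product of $A\otimes R_{\lambda}$ by $V\otimes R_{\lambda}$ rather than another page-long expansion; the paper's approach buys only self-containedness. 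One cosmetic slip: the promotion from simple tensors to general elements uses trilinearity of the Jacobi sum, not bilinearity.
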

\begin{proof}
First, let $(x,x'), (y,y'), (z,z') \in A^{\otimes 2}$ then we have:
	\begin{eqnarray}			
	&&\Big((x,x')\ast_{\lambda}(y,y')\Big)\ast_{\lambda}(z,z')+\Big((y,y')\ast_{\lambda}(z,z')\Big)\ast_{\lambda}(x,x')+\Big((z,z')\ast_{\lambda}(x,x')\Big)\ast_{\lambda}(y,y')\nonumber\\
	&&= \Big((x\ast y)\ast z+(y\ast z)\ast x+(z\ast x)\ast y,\Big((x\ast y)\ast z'+(y\ast z')\ast x+(x\ast z')\ast y\Big) \nonumber\\	
		&&+\Big((x\ast y')\ast z+(y'\ast z)\ast x+(z\ast x)\ast y'\Big)
		+\Big((x'\ast y)\ast z+(y\ast z)\ast x'+(z\ast x')\ast y\Big) \nonumber \\
	&&	+\lambda\Big((x'\ast y')\ast z+(y'\ast z)\ast x'+(z\ast x')\ast y'\Big)
		+\lambda\Big((x\ast y')\ast z'+(y'\ast z')\ast x+(z'\ast x)\ast y'\Big) \nonumber \\
	&&	+\lambda\Big((x'\ast z')\ast y+(y\ast z')\ast x'+(x'\ast y)\ast z'\Big)
		+\lambda^{2}\Big((x'\ast y')\ast z'+(y'\ast z')\ast x'+(z'\ast x')\ast y'\Big)\Big) \nonumber\\
		&&=0 \mbox{(by the Jacobi-identity in $(A,\ast)$ ).} \nonumber
\end{eqnarray}
	Next, let $(x,y),(z,t) \in A^{\otimes 2}$ and $(u,v)\in V^{\times 2}$, then we obtain by straightforward computations,
	\begin{eqnarray}							
&&\rho_{\lambda}((x,y)\ast_{\lambda}(z,t))(u,v)+	\rho_{\lambda}((x,y))\rho_{\lambda}((z,t))(u,v)+\rho_{\lambda}((z,t))\rho_{\lambda}((x,y))(u,v)\nonumber\\
&&=
\Big(\rho(x\ast z)u+\rho(x)\rho(z)u+\rho(z)\rho(x)u,\  
\rho(x\ast z)v+\rho(x)\rho(z)v+\rho(z)\rho(x)v \nonumber\\
&&+\rho(x\ast t)u+\rho(x)\rho(t)u+\rho(t)\rho(x)u
+\rho(y\ast z)u+\rho(y)\rho(z)u+\rho(z)\rho(y)u\nonumber\\
&&+\lambda\Big(\rho(y\ast t)u+\rho(t)\rho(y)u+\rho(y)\rho(t)u\Big)
+\lambda\Big(\rho(y\ast z)v+\rho(y)\rho(z)v+\rho(z)\rho(y)v\Big)\nonumber\\
&&+\lambda\Big(\rho(y\ast t)u+\rho(y)\rho(t)u+\rho(t)\rho(y)u\Big)
+\lambda^{2}\Big(\rho(y\ast t)v+\rho(y)\rho(t)v+\rho(t)\rho(y)v\Big)\Big)\nonumber\\
&&=(0,0) \mbox{( by (\ref{rpJJa}) ).}\nonumber	
\end{eqnarray}
Hence, the conclusion follows.
\end{proof}
Let $(A,\ast)$ be a Jacobi-Jordan algebra and $ (V, \rho)$ a representation on it. Consider now the Jacobi-Jordan algebra $(A\oplus A)_{\lambda}$ and its represnetation $(V\oplus V,\rho_{\lambda})$ given in Proposition \ref{ds}. Then, we obtain
\begin{proposition}
The direct sum $A\oplus A\oplus V\oplus V$ carries the semidirect product Jacobi-Jordan algebra structure with the product $\ltimes$ explicitly given by:
\begin{eqnarray}
	\Big(x,x',u,u'\Big)\ltimes \Big(y,y',v,v'\Big):=\Big(x\ast y,x\ast y'+x'\ast y+\lambda (x'\ast y'),\rho(x)v+\rho(y)u,\rho(x)v'+\rho(x')v\nonumber \\
	+\lambda\rho(x')v'+\rho(y)u'+\rho(y')u+\lambda\rho(y')u'\Big), \forall x,x',y,y' \in A, \forall u,u',v,v' \in V. \nonumber
\end{eqnarray}
\end{proposition}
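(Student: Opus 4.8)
The plan is to recognize the stated product $\ltimes$ as a special case of the semidirect product construction already carried out in Proposition \ref{sdpr}, applied to the Jacobi-Jordan algebra $(A\oplus A)_{\lambda}$ together with its representation $(V\oplus V,\rho_{\lambda})$ furnished by Proposition \ref{ds}. Indeed, the first half of the proof of Proposition \ref{sdpr} shows that whenever $(W,\sigma)$ is a representation of a Jacobi-Jordan algebra $(B,\star)$, the space $B\oplus W$ becomes a Jacobi-Jordan algebra under $(b,w)\ast_{\ltimes}(b',w'):=\big(b\star b',\ \sigma(b)w'+\sigma(b')w\big)$, the commutativity being immediate and the Jacobi identity reducing to the Jacobi identity in $B$ and the representation identity \eqref{rpJJa} for $\sigma$. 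So it suffices to take $B=(A\oplus A)_{\lambda}$, $W=V\oplus V$, $\star=\ast_{\lambda}$, $\sigma=\rho_{\lambda}$, to identify $B\oplus W$ with $A\oplus A\oplus V\oplus V$ via $\big((x,x'),(u,u')\big)\leftrightarrow(x,x',u,u')$, and to unwind the formula.

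Concretely, for elements $(x,x',u,u')$ and $(y,y',v,v')$, Proposition \ref{ds} gives
\begin{align*}
(x,x')\ast_{\lambda}(y,y') &= \big(x\ast y,\ x\ast y'+x'\ast y+\lambda(x'\ast y')\big),\\
\rho_{\lambda}(x,x')(v,v') &= \big(\rho(x)v,\ \rho(x)v'+\rho(x')v+\lambda\rho(x')v'\big),\\
\rho_{\lambda}(y,y')(u,u') &= \big(\rho(y)u,\ \rho(y)u'+\rho(y')u+\lambda\rho(y')u'\big).
\end{align*}
Substituting these into $\big(b\star b',\ \sigma(b)w'+\sigma(b')w\big)$ and reading the four coordinates in order reproduces exactly the displayed formula for $\ltimes$; in particular the two $V$-components of $\sigma(b)w'+\sigma(b')w$ are $\rho(x)v+\rho(y)u$ and $\rho(x)v'+\rho(x')v+\lambda\rho(x')v'+\rho(y)u'+\rho(y')u+\lambda\rho(y')u'$. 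Hence $\big(A\oplus A\oplus V\oplus V,\ltimes\big)$ is precisely the semidirect product of $(A\oplus A)_{\lambda}$ by $(V\oplus V,\rho_{\lambda})$, and therefore a Jacobi-Jordan algebra.

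There is no genuine obstacle here: both ingredients — that $(A\oplus A)_{\lambda}$ is a Jacobi-Jordan algebra with representation $(V\oplus V,\rho_{\lambda})$, and that the semidirect product of a Jacobi-Jordan algebra by a representation is again Jacobi-Jordan — are already available from Propositions \ref{ds} and \ref{sdpr}. The only point requiring care is the notational bookkeeping when matching the abstract formula $\big(b\star b',\ \sigma(b)w'+\sigma(b')w\big)$ against the four explicit slots of $A\oplus A\oplus V\oplus V$; alternatively one may bypass the reduction entirely and verify the commutativity and Jacobi identity of $\ltimes$ directly, which again collapses — after expansion — to the Jacobi identity in $(A,\ast)$ together with \eqref{rpJJa}, exactly as in the proofs of Propositions \ref{ds} and \ref{sdpr}.
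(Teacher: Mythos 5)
Your proof is correct, but it follows a genuinely different route from the paper's. The paper proves the proposition by brute force: it expands the triple sum $\big((x,x',u,u')\ltimes(y,y',v,v')\big)\ltimes(z,z',w,w')+\mathrm{c.p.}$ in full and observes that every group of three terms vanishes by the Jacobi identity in $(A,\ast)$ or by \eqref{rpJJa}, which occupies roughly a page of computation. You instead observe that $\ltimes$ is exactly the semidirect product of the Jacobi--Jordan algebra $(A\oplus A)_{\lambda}$ by its representation $(V\oplus V,\rho_{\lambda})$ from Proposition \ref{ds}, and then invoke the general semidirect-product construction; the coordinate bookkeeping you carry out does confirm that the four slots of the displayed formula match $\big(b\star b',\,\sigma(b)w'+\sigma(b')w\big)$. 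This reduction is sound and much shorter, and it explains \emph{why} the formula has the shape it does. The one point worth flagging is that Proposition \ref{sdpr} is formally stated for weighted Rota--Baxter Jacobi--Jordan algebras and their representations, so you cannot cite its statement verbatim; you correctly cite only the first half of its \emph{proof}, which establishes the purely algebraic fact that $B\oplus W$ with $(b,w)\ast_{\ltimes}(b',w')=\big(b\star b',\sigma(b)w'+\sigma(b')w\big)$ is a Jacobi--Jordan algebra whenever $(W,\sigma)$ is a representation of $(B,\star)$, with no Rota--Baxter data involved. Since that argument uses only the Jacobi identity in $B$ and the representation identity \eqref{rpJJa} for $\sigma$, and Proposition \ref{ds} supplies both for $B=(A\oplus A)_{\lambda}$, $\sigma=\rho_{\lambda}$, your argument is complete. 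What the paper's direct computation buys is self-containedness; what yours buys is brevity and conceptual clarity.
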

\begin{proof}
Let $ \Big(x,x',u,u'\Big),\Big(y,y',v,v'\Big),\Big(z,z',w,w'\Big) \in A\times A\times V\times V$, then by straightforward computations, we get
\begin{eqnarray}
&&\Big((x,x',u,u')\ltimes(y,y',v,v')\Big)\ltimes(z,z',w,w')+\Big((y,y',v,v')\ltimes(z,z',w,w')\big)\ltimes (x,x',u,u') \nonumber \\
&&+\Big((z,z',w,w')\ltimes(x,x',u,u')\Big)\ltimes(y,y',v,v')=\Big((x\ast y)\ast z+(y\ast z)\ast x+(z\ast x)\ast y,\nonumber \\
&&\Big((x\ast y)\ast z'+(y\ast z')\ast x+(z'\ast x)\ast y\Big)+\Big((x\ast y')\ast z+(y'\ast z)\ast x+(z\ast x)\ast y'\Big)\nonumber\\
&&+\Big((x'\ast y)\ast z+(y\ast z)\ast x'+(z\ast x')\ast y\Big)+\lambda^{2}\Big((x'\ast y')\ast z'+(y'\ast z')\ast x'+(z'\ast x')\ast y'\Big)\nonumber \\
&&+\lambda\Big((x'\ast y')\ast z+(y'\ast z)\ast x'+(z\ast x')\ast y'\Big)+\lambda\Big((x\ast y')\ast z'+(y'\ast z')\ast x+(z'\ast x)\ast y'\Big) \nonumber \\
&&+\lambda\Big((x'\ast y)\ast z'+(y\ast z')\ast x'+(z'\ast x')\ast y\Big),\big(\rho(x\ast y)w+\rho(x)\rho(y)w+\rho(y)\rho(x)w\Big)\nonumber 
\end{eqnarray}
\begin{eqnarray}
&&+\Big(\rho(y\ast z)u+\rho(y)\rho(z)u+\rho(z)\rho(y)u\Big)+\Big(\rho(z\ast x)v+\rho(z)\rho(x)v+\rho(x)\rho(z)v\Big), \nonumber \\
&&\Big(\rho(y'\ast z)u+\rho(y')\rho(z)u+\rho(z)\rho(y')u\Big)+\Big(\rho(y\ast z')u+\rho(y)\rho(z')u+\rho(z')\rho(y)u\Big)\nonumber \\
&&+\lambda\Big(\rho(y'\ast z')u+\rho(y')\rho(z')u+\rho(z')\rho(y')u\Big)+\Big(\rho(x\ast z')v+\rho(x')\rho(z)v+\rho(z)\rho(x')v\Big)\nonumber \\
&&+\Big(\rho(x'\ast z)v+\rho(x')\rho(z)v+\rho(z)\rho(x')v\Big)+\lambda\Big(\rho(z'\ast x')v+\rho(z')\rho(x')v+\rho(x')\rho(z')v\Big)\nonumber\\
&&+\Big(\rho(x\ast y')w+\rho(x)\rho(y')w+\rho(y')\rho(x)w\Big)+\Big(\rho(x'\ast y)w+\rho(x')\rho(y)w+\rho(y)\rho(x')w\Big)\nonumber\\
&&+\lambda\Big(\rho(x'\ast y')w+\rho(x')\rho(y')w+\rho(y')\rho(x')w\Big)+\Big(\rho(y\ast z)u'+\rho(y)\rho(z)u'+\rho(z)\rho(y)u'\Big)\nonumber\\
&&+\lambda\Big(\rho(y\ast z')u'+\rho(y)\rho(z')u'+\rho(z')\rho(y)u'\Big)+\lambda\Big(\rho(y'\ast z)u'+\rho(y')\rho(z)u'+\rho(z)\rho(y')u'\Big)\nonumber\\
&&+\lambda^{2}\Big(\rho(y'\ast z')u'+\rho(y')\rho(z')u'+\rho(z')\rho(y')u'\Big)+\Big(\rho(z\ast x)v'+\rho(z)\rho(x)v'+\rho(x)\rho(z)v'\Big) \nonumber \\
&&+\lambda\Big(\rho(z\ast x')v'+\rho(z)\rho(x')v'+\rho(x')\rho(z)v'\Big)+\Big(\rho(z'\ast x)v'+\rho(z')\rho(x)v'+\rho(x)\rho(z')v'\Big) \nonumber\\
&&+\lambda^{2}\Big(\rho(z'\ast x')v'+\rho(z')\rho(x')v'+\rho(x')\rho(z')v'\Big)
+\Big(\rho(x\ast y)w'+\rho(x)\rho(y)w'+\rho(y)\rho(x)w'\Big)\nonumber\\
&&+\lambda\Big(\rho(x\ast y')w'+\rho(x)\rho(y')w'+\rho(y')\rho(x)w'\Big)+\lambda\Big(\rho(x'\ast y)w'+\rho(x')\rho(y)w'+\rho(y)\rho(x')w'\Big) \nonumber \\
&&+\lambda^{2}\Big(\rho(x'\ast y')w'+\rho(x')\rho(y')w'+\rho(y')\rho(x')w'\Big)\Big). \nonumber		
\end{eqnarray}
The conclusion follows.
\end{proof}
We denote this Jacobi-Jordan algebra by $(A\oplus A\oplus V\oplus V)_{\lambda}$. We get the following characterization of a $\lambda$-weighted Rota-Baxter paired operators.
\begin{proposition}
	Let $(A,\ast)$ be a Jacobi-Jordan algebra and $(V,\rho)$ be a representation of it. A pair $(\mathcal{I},\mathcal{T})$ of linear maps
	$\mathcal{I}:A \rightarrow A$ and $ \mathcal{T}:V \rightarrow V$ forms a $\lambda$-weighted Rota-Baxter paired operators if and only if the set
	$$\mathcal{G}r_{(\mathcal{I},\mathcal{T})}:=\{(\mathcal{I}x, x, \mathcal{T}u, u): x \in g, u \in V\}\subset A\oplus A\oplus V\oplus V$$
	is a Jacobi-Jordan subalgebra of $(A\oplus A\oplus V\oplus V)_{\lambda}$.
\end{proposition}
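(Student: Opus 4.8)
The plan is to observe first that $\mathcal{G}r_{(\mathcal{I},\mathcal{T})}$ is the image of the injective linear map
\[
A\oplus V\longrightarrow A\oplus A\oplus V\oplus V,\qquad (x,u)\longmapsto(\mathcal{I}x,\,x,\,\mathcal{T}u,\,u),
\]
so it is automatically a linear subspace. Since a subalgebra of $(A\oplus A\oplus V\oplus V)_{\lambda}$ is nothing but a linear subspace closed under the product $\ltimes$ (the Jacobi-Jordan axioms being inherited from the ambient algebra), the statement reduces entirely to computing $\big(\mathcal{I}x,x,\mathcal{T}u,u\big)\ltimes\big(\mathcal{I}y,y,\mathcal{T}v,v\big)$ and deciding when this element lies back in $\mathcal{G}r_{(\mathcal{I},\mathcal{T})}$, for all $x,y\in A$ and $u,v\in V$.

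Next I would substitute $(\mathcal{I}x,x,\mathcal{T}u,u)$ and $(\mathcal{I}y,y,\mathcal{T}v,v)$ into the explicit formula for $\ltimes$. A direct expansion gives
\[
\big(\mathcal{I}x,x,\mathcal{T}u,u\big)\ltimes\big(\mathcal{I}y,y,\mathcal{T}v,v\big)=\big(\mathcal{I}x\ast\mathcal{I}y,\ a,\ \rho(\mathcal{I}x)\mathcal{T}v+\rho(\mathcal{I}y)\mathcal{T}u,\ b\big),
\]
where $a:=\mathcal{I}x\ast y+x\ast\mathcal{I}y+\lambda(x\ast y)$ and $b:=\rho(\mathcal{I}x)v+\rho(x)\mathcal{T}v+\lambda\rho(x)v+\rho(\mathcal{I}y)u+\rho(y)\mathcal{T}u+\lambda\rho(y)u$. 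Since a quadruple $(p,q,s,t)$ belongs to $\mathcal{G}r_{(\mathcal{I},\mathcal{T})}$ exactly when $p=\mathcal{I}q$ and $s=\mathcal{T}t$, stability of $\mathcal{G}r_{(\mathcal{I},\mathcal{T})}$ under $\ltimes$ is equivalent to the pair of identities
\[
\mathcal{I}x\ast\mathcal{I}y=\mathcal{I}(a)\qquad\mbox{ and }\qquad\rho(\mathcal{I}x)\mathcal{T}v+\rho(\mathcal{I}y)\mathcal{T}u=\mathcal{T}(b)
\]
holding for all $x,y\in A$ and $u,v\in V$.

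The first identity is literally condition (\ref{An}), so it holds precisely when $(A,\mathcal{I})$ is a $\lambda$-weighted Rota-Baxter Jacobi-Jordan algebra. For the second one, I would note that it is the symmetrization of (\ref{Am}) in the two pairs $(x,u)$ and $(y,v)$: adding the instance of (\ref{Am}) at $(x,v)$ to the instance at $(y,u)$ produces exactly $\rho(\mathcal{I}x)\mathcal{T}v+\rho(\mathcal{I}y)\mathcal{T}u=\mathcal{T}(b)$, so (\ref{Am}) implies the second identity; conversely, setting $v=0$ in the second identity annihilates every term carrying a factor $v$ or $\mathcal{T}v$ and leaves $\rho(\mathcal{I}y)\mathcal{T}u=\mathcal{T}\big(\rho(\mathcal{I}y)u+\rho(y)\mathcal{T}u+\lambda\rho(y)u\big)$ for all $y\in A$, $u\in V$, i.e.\ (\ref{Am}). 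Hence the pair of identities is equivalent to the conjunction of (\ref{An}) and (\ref{Am}), which by Proposition \ref{P} (equivalently, by Definition \ref{D}) says exactly that $(\mathcal{I},\mathcal{T})$ is a $\lambda$-weighted Rota-Baxter paired operators.

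The one step that requires a little care, and that I would flag as the main point, is this last equivalence: a priori the closure condition on the third coordinate is only the $(x,u)\leftrightarrow(y,v)$-symmetric consequence of (\ref{Am}), and one must use linearity of $\rho$ and $\mathcal{T}$ in the $V$-argument — concretely, specializing one vector entry to $0$ — to recover (\ref{Am}) in full generality. Everything else is a mechanical expansion of the defining formula for $\ltimes$ followed by a coordinate-by-coordinate comparison.
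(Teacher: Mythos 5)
Your proof is correct and follows essentially the same route as the paper: expand $\bigl(\mathcal{I}x,x,\mathcal{T}u,u\bigr)\ltimes\bigl(\mathcal{I}y,y,\mathcal{T}v,v\bigr)$ coordinate-by-coordinate and observe that membership in $\mathcal{G}r_{(\mathcal{I},\mathcal{T})}$ is equivalent to (\ref{An}) and (\ref{Am}). Your extra step of specializing $v=0$ to recover (\ref{Am}) from its symmetrized form is a worthwhile detail that the paper's proof asserts without justification.
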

\begin{proof}
	Let $x,y\in A$ and $u,v\in V.$ Then, we have
	\begin{eqnarray}
&&	\Big(\mathcal{I}(x),x,\mathcal{T}(u),u\Big)\ltimes \Big(\mathcal{I}(y),y,\mathcal{T}(v),v\Big)\nonumber\\
=&&\Big(\mathcal{I}(x)\ast \mathcal{I}(y), \mathcal{I}(x)\ast y+x\ast \mathcal{I}(y)+\lambda(x\ast y),\rho(\mathcal{I}x)\mathcal{T}v+\rho(\mathcal{I}y)\mathcal{T}u, \nonumber\\&&
\rho(\mathcal{I}x)v+\rho(x)\mathcal{T}v+\lambda\rho(x)v+\rho(\mathcal{I}y)u+\rho(y)\mathcal{T}u+\lambda\rho(y)u\Big)\nonumber
	\end{eqnarray}
Hence, $\Big(\mathcal{I}(x),x,\mathcal{T}(u),u\Big)\ltimes \Big(\mathcal{I}(y),y,\mathcal{T}(v),v\Big)\in \mathcal{G}r_{(\mathcal{I},\mathcal{T})} \Longleftrightarrow \Big(\mbox{ (\ref{An}) and (\ref{Am}) hold} \Big).$
\end{proof}
\section{Zigzag cohomology of $\lambda$- weighted Rota-Baxter Jacobi-Jordan algebras}\label{s4}
	
	In this section, we first recall the zigzag cohomology of a Jacobi-Jordan algebra with coefficients in a
	representation \cite{Momo5}. Then we define the zigzag cohomology of a weighted Rota-Baxter Jacobi-Jordan algebra with coefficients in a representation. This zigzag cohomology is obtained as a byproduct of the zigzag cohomology of
	the underlying Jacobi-Jordan algebra with the one of the weighted Rota-Baxter operators.
	
	Let $(A,\ast)$ be a Jacobi-Jordan algebra and $(V, \rho)$ be a representation of it. The zigzag cohomology of $(A,\ast)$ with coefficients in $V$ \cite{Momo5} is given by the cohomology of the cochain complex 
	$(\mathcal{C}^{\bullet}(A,V),\mathcal{A}^{\bullet}(A,V),d^{\bullet},\delta^{\bullet})$, where for all  integer $n\geq 0$, 
	\begin{eqnarray}
		&&d^n: \mathcal{C}^{n}(A,V) \longrightarrow \mathcal{C}^{n+1}(A,V), \nonumber \\
		&&d^n f(x_1, \cdots, x_{n+1})
		=\sum\limits_{i=1}^{n+1}\rho(x_i)f(x_1,\cdots,\widehat{x_i},\cdots, x_{n+1})\nonumber\\
		&&+\sum\limits_{1\leq i< j\leq n+1} f(x_i\ast x_j, x_1, \cdots, \widehat{x}, \cdots, \widehat{x_j}, \cdots, x_{n+1}), \nonumber
	\end{eqnarray}
	\begin{eqnarray}
		&&\delta^n: \mathcal{A}^{n}(A,V) \longrightarrow \mathcal{C}^{n+1}(A,V), \nonumber \\
		&&\delta^n g(x_1, \cdots, x_{n+1})
		=\sum\limits_{i=1}^{n+1}\rho(x_i)g(x_1,\cdots,\widehat{x_i},\cdots, x_{n+1})\nonumber\\
		&&-\sum\limits_{1\leq i< j\leq n+1} g(x_i\ast x_j, x_1, \cdots, \widehat{x}, \cdots,\widehat{x_j},\cdots,x_{n+1})  \nonumber	
	\end{eqnarray}
for all  $(x_{1},x_{2},\cdot\cdot\cdot,x_{n+1}) \in A^{\otimes (n+1)}$, $ f \in \mathcal{C}^{n}(A,V)$ and $g\in\mathcal{A}^{n}(A,V).$\\
\\
	Let $(A,\mathcal{I})$ be a $\lambda$-weighted Rota-Baxter Jacobi-Jordan algebra and $(V,\mathcal{T})$ be a representation of it. We have proved in Theorem $\ref{Ta}$ that the pair $\tilde{V}=(V,\tilde{\rho})$ is a representation of the Jacobi-Jordan  algebra $A_{\mathcal{I}}$. Therefore, as a similar way, we can define the corresponding zigzag cohomology. Here, $(\mathcal{C}^{\bullet}(A_{\mathcal{I}},\tilde{V}),\mathcal{A}^{\bullet}(A_{\mathcal{I}},\tilde{V}), \tilde{d}^{\bullet},\tilde{\delta}^{\bullet})$ is the cochain complex with 
	\begin{eqnarray}		
	&&	\tilde{d}^n: \mathcal{C}^{n}(A_{\mathcal{I}},\tilde{V}) \longrightarrow \mathcal{C}^{n+1}(A_{\mathcal{I}},\tilde{V}), \nonumber \\
		&&\tilde{d}^n f(x_1, \cdots, x_{n+1})=\sum\limits_{i=1}^{n+1}\tilde{\rho}(x_i)f(x_1,\cdots,\widehat{x_i},\cdots, x_{n+1})\nonumber\\
		&&+\sum\limits_{1\leq i< j\leq n+1} f(x_i\ast_{\mathcal{I}} x_j, x_1, \cdots, \widehat{x_{i}}, \cdots, \widehat{x_j}, \cdots, x_{n+1}), \nonumber
	\end{eqnarray}
	\begin{eqnarray}	
	&&\tilde{\delta}^n: \mathcal{A}^{n}(A_{\mathcal{I}},\tilde{V}) \longrightarrow \mathcal{C}^{n+1}(A_{\mathcal{I}},\tilde{V}), \nonumber \\
	&&	\tilde{\delta}^n g(x_1, \cdots, x_{n+1})=\sum\limits_{i=1}^{n+1}\tilde{\rho}(x_i)g(x_1,\cdots,\widehat{x_i},\cdots, x_{n+1})\nonumber\\
		&&-\sum\limits_{1\leq i< j\leq n+1} g(x_i\ast_{\mathcal{I}} x_j, x_1, \cdots, \widehat{x_{i}}, \cdots,\widehat{x_j},\cdots,x_{n+1})
		\nonumber
	\end{eqnarray}
	for all  $(x_{1},x_{2},\cdot\cdot\cdot,x_{n+1}) \in A^{\otimes(n+1)},$ $f \in \mathcal{C}^{n}(A_{\mathcal{I}},\tilde{V})$ and $g\in\mathcal{A}^{n}(A_{\mathcal{I}},\tilde{V}).$
	The corresponding  cohomology spaces are called the zigzag cohomology of $\mathcal{I}$ with coefficients in the representation $\mathcal{T}.$ \\
	\\
	From this moment on, let us define the zigzag cohomology of the $\lambda$-weighted Rota-Baxter Jacobi-Jordan algebra $(A,\mathcal{I})$ with coefficients in the representation $(V,\mathcal{T}).$
	Firstly, we consider the two cochain complexes, namely
	\begin{itemize}
		\item [$\diamond$] $(\mathcal{C}^{\bullet}(A,V), \mathcal{A}^{\bullet}(A,V), d^{\bullet},\delta^{\bullet})$ defining the zigzag cohomology of the Jacobi-Jordan algebra
		$(A,\ast)$ with coefficients in the representation $(V,\rho)$,
		\item [$\diamond$] $(\mathcal{C}^{\bullet}(A_{\mathcal{I}},\tilde{V}),\mathcal{A}^{\bullet}(A_{\mathcal{I}},\tilde{V}), \tilde{d}^{\bullet},\tilde{\delta}^{\bullet})$ defining the zigzag cohomology of $\mathcal{I}$ with coefficients in the representation $\mathcal{T}.$	
	\end{itemize}

\begin{proposition}
	Let $(A, \mathcal{I})$ be a $\lambda$- weighted Rota-Baxter Jacobi-Jordan algebra and $(V, \mathcal{T})$ be a representation of it. Then, 
	the maps $\phi^{n}_{1}:\mathcal{C}^{n}(A,V)\rightarrow \mathcal{C}^{n}(A_{\mathcal{I}},V)$ and  $\phi^{n}_{2}: \mathcal{A}^{n}(A,V)\rightarrow \mathcal{C}^{n}(A_{\mathcal{I}},V)$ defined by: 
	\begin{equation}		
		\phi^{n}_{1}: \left\{\begin{array}{ll}
			\phi^{0}_{1}(v):=v & \text{ if } n=0 ,\forall v\in V\\
			\phi^{1}_{1}(f):=f\circ \mathcal{I}-\mathcal{T}\circ f& \text{ if } n=1, \forall f\in \mathcal{C}^{1}(A,V) \\
		\end{array} \right.
	\end{equation}
	
	\begin{equation}		
		\phi^{n}_{2}: \left\{\begin{array}{ll}
			\phi^{0}_{2}(v):=v & \text{ if } n=0, \forall v\in V \\
			\phi^{1}_{2}(f):=f\circ \mathcal{I}+\mathcal{T}\circ f & \text{ if } n=1, \forall f\in \mathcal{A}^{1}(A,V) \\
		\end{array} \right.
	\end{equation}
	satisfy
	\begin{eqnarray}
	\tilde{d}^0\circ\phi^{0}_{2}=\phi^{1}_{1}\circ\delta^0. \label{Zo}
	\end{eqnarray}
\end{proposition}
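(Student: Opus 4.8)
The plan is to verify the identity $(\ref{Zo})$ by a direct computation: evaluate both composites on an arbitrary $v\in\mathcal A^0(A,V)=V$ and then test the resulting $1$-cochains on an arbitrary $x\in A$. There is no structural difficulty here; the entire content is that the definition of $\tilde\rho$ from Theorem \ref{Ta} reproduces, term by term, the combination $(\,\cdot\,)\circ\mathcal I-\mathcal T\circ(\,\cdot\,)$ used in $\phi^1_1$, together with the fact that $\delta^0$ is given simply by the action $\rho$ (its ``wedge'' sum being empty in degree $0$).

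First I would compute the left-hand side. Since $\phi^0_2(v)=v$, we have $(\tilde d^0\circ\phi^0_2)(v)=\tilde d^0(v)$, and by the formula for $\tilde d^n$ in the case $n=0$ (where the second sum is empty) this is the $1$-cochain $x\mapsto\tilde\rho(x)v$. Invoking the definition $\tilde\rho(x)u=\rho(\mathcal I x)u-\mathcal T(\rho(x)u)$ from Theorem \ref{Ta}, the left-hand side is the map $x\mapsto \rho(\mathcal I x)v-\mathcal T(\rho(x)v)$.

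Next I would compute the right-hand side. Here $\delta^0 v\in\mathcal C^1(A,V)$ is, again because the second sum vanishes in degree $0$, the $1$-cochain $y\mapsto\rho(y)v$. Applying $\phi^1_1$, which sends $f\in\mathcal C^1(A,V)$ to $f\circ\mathcal I-\mathcal T\circ f$, gives the $1$-cochain $x\mapsto(\delta^0v)(\mathcal I x)-\mathcal T\big((\delta^0 v)(x)\big)=\rho(\mathcal I x)v-\mathcal T(\rho(x)v)$, which coincides with the left-hand side computed above; hence $(\ref{Zo})$ holds. The only point needing attention is bookkeeping of domains: one must apply $\phi^1_1$ to $\delta^0 v$ (an element of $\mathcal C^1(A,V)$, the actual domain of $\phi^1_1$) and $\phi^0_2$ before $\tilde d^0$, and once these are tracked correctly the identity is immediate, so I do not expect any genuine obstacle.
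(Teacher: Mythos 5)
Your proof is correct and follows essentially the same route as the paper: evaluate both composites on $v\in V$ and $x\in A$, use that the second sums in $\tilde d^0$ and $\delta^0$ are empty, and invoke the formula $\tilde\rho(x)v=\rho(\mathcal I x)v-\mathcal T(\rho(x)v)$ from Theorem \ref{Ta}. (Incidentally, your computation also silently corrects a typo in the paper's display, where $\mathcal I(\rho(x)v)$ appears in place of $\mathcal T(\rho(x)v)$.)
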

	\begin{proof}
	Let $x\in A$ and $v\in V$, then we have 
			\begin{eqnarray}
				(\tilde{d}^0\circ\phi^{0}_{2}(v))(x)=\tilde{d}^0(v)(x)
				=\tilde{\rho}(x)v
				=\rho(\mathcal{I}x)v-\mathcal{I}(\rho(x)v),\label{Pa}
			\end{eqnarray}
			\begin{eqnarray}
				(\phi^{1}_{1}\circ\delta^0(v))(x)
				=\delta^0(v)(\mathcal{I}x)-\mathcal{T}(\delta^0(v)x)\
				=\rho(\mathcal{I}x)v-\mathcal{T}(\rho(x)v).\label{Va}	
			\end{eqnarray}
			 The conclusion follows from (\ref{Pa}) and (\ref{Va}).
		 	\end{proof}
		Let $(A, \mathcal{I})$ be a $\lambda$- weighted Rota-Baxter Jacobi-Jordan algebra and $(V, \mathcal{T})$ be a representation of it. 	
	For each $n \in \{0, 1\}$, we define spaces $\mathcal{C}^{n}_{RB}(A,V)$ and $\mathcal{A}^{n}_{RB}(A,V)$ by
	\begin{equation}
		\mathcal{C}^{n}_{RB}(A,V) := \left\{\begin{array}{ll}
			\mathcal{C}^{0}(A,V)=V& \text{ if } n=0, \\
			\mathcal{C}^{n}(A,V)\oplus \mathcal{C}^{n-1}(A_{\mathcal{I}},\tilde{V})  & \text{ if } n=1, \\
		\end{array} \right.
	\end{equation}
\begin{equation}
	\mathcal{A}^{n}_{RB}(A,V) := \left\{\begin{array}{ll}
		\mathcal{A}^{0}(A,V)=V& \text{ if } n=0, \\
		\mathcal{A}^{n}(A,V)\oplus \mathcal{A}^{n-1}(A_{\mathcal{I}},\tilde{V})  & \text{ if } n=1. \\
	\end{array} \right.
\end{equation}		
We also define the maps 
\begin{eqnarray}
	d_{RB}^n:\mathcal{C}^{n}_{RB}(A,V)\rightarrow \mathcal{C}^{n+1}_{RB}(A,V) \nonumber 	
\end{eqnarray}
\begin{equation}		
d_{RB}^n: \left\{\begin{array}{ll}
	d_{RB}^0(v,v):=(d^0(v),-v)& \text{ if } v\in \mathcal{C}^{0}_{RB}(A,V), \\
	d_{RB}^n(f,g):=(d^n(f),-\tilde{d}^{n-1}(g)-\phi^{n}_{1}(f))  & \text{ if } (f,g)\in \mathcal{C}^{n}_{RB}(A,V), \\
\end{array} \right.
\end{equation}

\begin{eqnarray}
	\delta_{RB}^n:\mathcal{A}^{n}_{RB}(A,V)\rightarrow \mathcal{C}^{n+1}_{RB}(A,V) \nonumber 	
\end{eqnarray}
\begin{equation}		
	\delta_{RB}^n(v,v): \left\{\begin{array}{ll}
		\delta_{RB}^0(v,v):=(\delta^0(v),-v)& \text{ if } v\in \mathcal{A}^{0}_{RB}(A,V), \\
		\delta_{RB}^n(f,g):=(\delta^n(f),-\tilde{\delta}^{n-1}(g)-\phi^{n}_{2}(f))  & \text{ if } (f,g)\in \mathcal{A}^{n}_{RB}(A,V), \\
	\end{array} \right.
\end{equation} 
\begin{proposition}
	With previous notations, we have:
\begin{eqnarray}
	d_{RB}^1\circ\delta_{RB}^0=0. \label{Yo}
\end{eqnarray}
\end{proposition}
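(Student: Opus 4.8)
The plan is to evaluate the composite $d_{RB}^1\circ\delta_{RB}^0$ on an arbitrary element $v\in\mathcal{A}^0_{RB}(A,V)=V$ and to check that each of the two coordinates of the result (which lives in $\mathcal{C}^2(A,V)\oplus\mathcal{C}^1(A_{\mathcal{I}},\tilde{V})$) vanishes. First I would unwind the definitions: by definition of $\delta_{RB}^0$ one has $\delta_{RB}^0(v)=(\delta^0(v),-v)$ with $\delta^0(v)\in\mathcal{C}^1(A,V)$ and $-v\in\mathcal{C}^0(A_{\mathcal{I}},\tilde{V})$, and then applying the degree-$1$ instance of $d_{RB}$ gives
\begin{eqnarray}
d_{RB}^1\big(\delta_{RB}^0(v)\big)&=&\Big(d^1(\delta^0(v)),\,-\tilde{d}^0(-v)-\phi^1_1(\delta^0(v))\Big)\nonumber\\
&=&\Big(d^1(\delta^0(v)),\,\tilde{d}^0(v)-\phi^1_1(\delta^0(v))\Big).\nonumber
\end{eqnarray}
Hence it suffices to establish $d^1(\delta^0(v))=0$ and $\tilde{d}^0(v)=\phi^1_1(\delta^0(v))$.

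For the first coordinate I would compute directly. Since $\delta^0(v)$ is the $1$-cochain $x\mapsto\rho(x)v$, the defining formula for $d^1$ gives, for all $x,y\in A$,
\begin{eqnarray}
d^1(\delta^0(v))(x,y)&=&\rho(x)\rho(y)v+\rho(y)\rho(x)v+\rho(x\ast y)v,\nonumber
\end{eqnarray}
which is $0$ by the representation identity (\ref{rpJJa}); this is exactly the relation $d^1\circ\delta^0=0$ in the zigzag complex of $(A,\ast)$. For the second coordinate I would invoke the identity (\ref{Zo}) proved just above, namely $\tilde{d}^0\circ\phi^0_2=\phi^1_1\circ\delta^0$. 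Since $\phi^0_2$ is the identity map of $V$ in degree $0$, this gives $\phi^1_1(\delta^0(v))=\tilde{d}^0(\phi^0_2(v))=\tilde{d}^0(v)$, so $\tilde{d}^0(v)-\phi^1_1(\delta^0(v))=0$. Combining the two coordinate computations yields $d_{RB}^1(\delta_{RB}^0(v))=(0,0)$ for every $v\in V$, that is, $d_{RB}^1\circ\delta_{RB}^0=0$.

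I do not expect any serious obstacle: all the real content is already carried by the representation axiom (\ref{rpJJa}) and by the previously established relation (\ref{Zo}). The only things that need care are the bookkeeping of which summand of $\mathcal{C}^{\bullet}_{RB}(A,V)$ each term belongs to and the sign manipulation $-\tilde{d}^0(-v)=\tilde{d}^0(v)$ that feeds the $g$-coordinate correctly into $d_{RB}^1$.
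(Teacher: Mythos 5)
Your proof is correct and follows essentially the same route as the paper's: both reduce the second coordinate to the previously established relation (\ref{Zo}) and the first coordinate to the cochain identity $d^{1}\circ\delta^{0}=0$ (which you verify directly from (\ref{rpJJa}) rather than merely citing). If anything, your bookkeeping is cleaner than the paper's, since you consistently use the degree-zero formula $\delta_{RB}^{0}(v)=(\delta^{0}(v),-v)$ on the domain $\mathcal{A}^{0}_{RB}(A,V)=V$, whereas the paper's displayed computation mixes in the degree-$n$ formula and a spurious $\tilde{d}^{1}\circ\tilde{\delta}^{0}$ term.
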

\begin{proof}
Let $x\in A$ and $u,v\in V,$ then we have: 
\begin{eqnarray}
	(d_{RB}^1\circ\delta_{RB}^0)(u,v)&=&d_{RB}^1\Big(\delta(u),-\tilde{\delta}^0(v)-\phi^{0}_{2}(u)\Big)\nonumber\\
	&=&d_{RB}\Big(\delta^0(u),-\tilde{\delta}^0(v)-u\Big)\nonumber\\
	&=&	\Big(d^1\circ\delta^0(u),\tilde{d}^1\circ\tilde{\delta}^0(v)+\tilde{d}^0(u)-\phi_{1}^{1}(\delta^0(u)) \Big) \nonumber \\
		&=&	\Big(d^1\circ\delta^0(u),\tilde{d}^1\circ\tilde{\delta}^0(v)+\tilde{d}^0(u)-\tilde{d}^0(u)) \Big) \mbox{( by (\ref{Zo}) )}\nonumber \\
		&=&(0,0) \mbox{ (since  $d\circ\delta=\tilde{d}\circ\tilde{\delta}=0$)}.\nonumber
\end{eqnarray}
Hence, we obtain (\ref{Yo}) and the proof follows.
\end{proof}
 	This shows that $(\mathcal{C}^{\bullet}_{RB}(A,V),\mathcal{A}^{\bullet}_{RB}(A,V), d_{RB}^{\bullet},\delta_{RB}^{\bullet})$ is a cochain complex.
	Let $\mathcal{Z}_{RB}^{\bullet}(A,V)$ and $\mathcal{B}_{RB}^{\bullet}(A,V)$ denote the cocycle and coboundary spaces, respectively. Then we have $\mathcal{B}_{RB}^n(A,V)\subset \mathcal{Z}^n_{RB}(A,V)$ and the corresponding quotient is 
	\begin{eqnarray}
	\mathcal{H}_{RB}^n(A,V):=\mathcal{Z}_{RB}^n(A,V)/\mathcal{B}_{RB}^n(A,V), \mbox{$n=0,1.$}	
	\end{eqnarray}
To compute some cohomology spaces of $\lambda$-weighted Jacobi-Jordan algebras with coefficients in representations, let consider the following
\begin{definition} $\label{De}$
	Let $(A, \mathcal{I})$ be a $\lambda$-weighted Rota-Baxter Jacobi-Jordan algebra and $(V,\mathcal{T})$ be a representation of it. A pair $(\eta,v) \in \mathcal{H}om(A,V)\oplus V$ is said to be an antiderivation of $(A, \mathcal{I})$ with values in the representation $(V,\mathcal{T})$ if they satisfy
	\begin{eqnarray}
	&&	\eta(x\ast y)=-\rho(x)(\eta(y))-\rho(y)(\eta(x)), \forall x,y\in A,\label{ant1}\\
	&&	\eta(\mathcal{I}x)-\mathcal{T}(\eta(x))=\mathcal{T}(\rho(x)v)-\rho(\mathcal{I}x)(v), \forall x\in A. \label{ant2}		
	\end{eqnarray}	
\end{definition}
Observe that the first condition (\ref{ant1}) means that $\eta$ is an antiderivation of the Jacobi-Jordan  algebra $(A,\ast)$ with values in the representation $(V,\rho)$.
The second condition (\ref{ant2}) says that the obstruction of vanishing $\eta\circ\mathcal{I}-\mathcal{T}\circ\eta$ is measured by the presence of $v$.

Denote by $\mathcal{AD}er(A,V)$  the set of all antiderivations of the $\lambda$-weighted Rota-Baxter Jacobi-Jordan algebra $(A,\mathcal{I})$ with values in the representation $(V,\mathcal{T})$. 
	\begin{proposition}
		Let $(A, \mathcal{I})$ be a $\lambda$-weighted Rota-Baxter Jacobi-Jordan algebra and $(V,\mathcal{T})$ be a representation of it. Then, for all $v\in V$, $(D_v,-v)$  where
		\begin{eqnarray}
			D_v: A\rightarrow V, x\mapsto D_v(x):=\rho(x)v
		\end{eqnarray}
		is an antiderivation of $(A, \mathcal{I})$ with values in $(V,\mathcal{T})$, called an inner antiderivation of $(A, \mathcal{I})$ with values in $(V,\mathcal{T})$.
	\end{proposition}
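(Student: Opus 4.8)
The plan is to verify directly that the pair $(D_v,-v)$ meets the two requirements (\ref{ant1}) and (\ref{ant2}) in Definition \ref{De}, reading $-v\in V$ as the element occupying the second slot. Both checks are short substitutions, so the ``hard part'' here is merely bookkeeping of signs; nothing beyond the axioms already recorded is needed.

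First I would establish (\ref{ant1}). For $x,y\in A$ one has $D_v(x\ast y)=\rho(x\ast y)v$, and applying the defining identity (\ref{rpJJa}) of the representation $(V,\rho)$ of the Jacobi-Jordan algebra $A$ gives $\rho(x\ast y)v=-\rho(x)\rho(y)v-\rho(y)\rho(x)v=-\rho(x)\big(D_v(y)\big)-\rho(y)\big(D_v(x)\big)$. Hence $\eta:=D_v$ is an antiderivation of $(A,\ast)$ with values in $(V,\rho)$, which is exactly condition (\ref{ant1}).

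Next I would check (\ref{ant2}) with $v$ replaced by $-v$ in the right-hand side. On the left, $D_v(\mathcal{I}x)-\mathcal{T}\big(D_v(x)\big)=\rho(\mathcal{I}x)v-\mathcal{T}\big(\rho(x)v\big)$. On the right, $\mathcal{T}\big(\rho(x)(-v)\big)-\rho(\mathcal{I}x)(-v)=-\mathcal{T}\big(\rho(x)v\big)+\rho(\mathcal{I}x)v$. The two sides coincide, so (\ref{ant2}) holds and $(D_v,-v)\in\mathcal{AD}er(A,V)$.

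I do not anticipate any genuine obstacle: the first identity uses only (\ref{rpJJa}), while the second is a purely formal rearrangement that does not even invoke the compatibility (\ref{rpRJJa}) of $\mathcal{T}$ with $\rho$. The one point worth emphasising in the write-up is that the sign in the second component, namely $-v$, is precisely what is forced so that the obstruction $\eta\circ\mathcal{I}-\mathcal{T}\circ\eta$ is realised by that element, which justifies calling $(D_v,-v)$ the \emph{inner} antiderivation determined by $v$, in analogy with inner derivations.
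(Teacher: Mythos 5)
Your proposal is correct and follows exactly the paper's own argument: condition (\ref{ant1}) via the representation identity (\ref{rpJJa}), and condition (\ref{ant2}) by the direct sign rearrangement with $-v$ in the second slot. Nothing further is needed.
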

	\begin{proof}
Let $x,y\in A$, then using (\ref{rpJJa}), we have first
\begin{eqnarray}
D_v(x\ast y)=\rho(x\ast y)v=-\rho(x)(\rho(y)v)-\rho(y)(\rho(x)v)
=-\rho(x)(D_v(y))-\rho(y)(D_v(x)).\nonumber
\end{eqnarray}
Next, we compute
\begin{eqnarray}
	D_v(\mathcal{I}x)-\mathcal{T}(D_v(x))=\rho(\mathcal{I}x)v-\mathcal{T}(\rho(x)v)
	=-\rho(\mathcal{I}x)(-v)+\mathcal{T}(\rho(x)(-v)).\nonumber
\end{eqnarray}
This ends the proof.
	\end{proof}	
Denote by  $Inn\mathcal{AD}er(A,V)$ the set of all  inner antiderivations of the $\lambda$-weighted Rota-Baxter Jacobi-Jordan algebra $(A,\mathcal{I})$ with values in the representation $(V,\mathcal{T})$. Then
\begin{eqnarray}
Inn\mathcal{AD}er(A,V):=\{(D_v,-v),v\in V\}=\{(\rho(-)v,-v),v\in V\}=\{\delta_{RB}^0(v,v),v\in V\}. \label{iad}
\end{eqnarray}
\begin{remark}
The notion of antiderivations (respectively inner antiderivations) of $(A,\mathcal{I})$ with values in the adjoint representation $(A,\mathcal{I})$ is called antiderivations (respectively inner antiderivations) of $(A,\mathcal{I})$. Let us denote their sets by $\mathcal{AD}er(A)$ and $Inn\mathcal{AD}er(A)$ respectively.
\end{remark}

\begin{example}\label{cant}
	\begin{enumerate}		
		\item[(i)] Let consider the adjoint representation of a $\lambda$-weighted Rota-Baxter Jacobi-Jordan algebra $(A,\mathcal{I})$, then for all $x\in A$, the pair $(L_{x},-x)$ where $ L_{x}(y):=x\ast y $ $\forall ,y\in A$ is an (inner) antiderivation of $(A,\mathcal{I})$.
		\item[(ii)] Let's consider the $0$-weighted Rota-Baxter Jacobi-Jordan algebra $(A,\mathcal{I})$ defined in Example \ref{F}.
	 where 
		\begin{eqnarray}
			\mathcal{I}=
			\begin{pmatrix}
				0&0	\\
				b&d	
			\end{pmatrix}, 
			b,d \in\mathbb{K}, d\neq 0.
		\end{eqnarray}
	Let $\delta: A\rightarrow A$ be a linear map such that the matrix of $\delta$ on the basis $\{ e_1,e_2\}$ is given by
	$\begin{pmatrix}
		a_{11}&a_{12}	\\
		a_{21}&a_{22}	
	\end{pmatrix} $  and $x=x_1e_1+x_2e_2\in A$ where $a_{11}, a_{21}, a_{12}, a_{22}, x_1, x_2$ are scalars.\\
	 Then, $(\delta,x)$ is an antiderivation of $(A,\mathcal{I})$ if and only if
	 \begin{eqnarray}
	 	&&\delta(e_i\ast e_j)=-e_i\ast \delta(e_j)-\delta(e_i)\ast e_j, \mbox{$ i,j=1,2,$} \label{a1}\\
	 	&& \delta(\mathcal{I}(e_k))-\mathcal{I}(\delta(e_k))=\mathcal{I}(e_k\ast x)-\mathcal{I}(e_k)\ast x, \mbox{$k=1,2$.}\label{a2}
	 \end{eqnarray}
Equations (\ref{a1}) and (\ref{a2}) hold if and only if
$\delta=\begin{pmatrix}
	a_{11}&0	\\
	a_{21}&-2a_{11}	
\end{pmatrix} $
and $x=(-a_{21}+\frac{3a_{11}b}{d})e_1+x_2e_2$.
Let the vector $v$ be represented by its column matrix. Then, we have
$$\mathcal{AD}er(A)=\left\{\left(\begin{pmatrix}
	a_{11}&0	\\
	a_{21}&-2a_{11}	
\end{pmatrix}, \begin{pmatrix}
-a_{21}+\frac{3a_{11}b}{d}	\\
x_2	
\end{pmatrix} \right), a_{11}, a_{21}, x_2\in\mathbb{R}\right\} \mbox{ and its dimension if $5$}.$$
Similarly for any $x\in A$, $(L_x,-x)$ where $L_x(y)=x\ast y,\ \forall y\in A$, is an (inner) antiderivation  of $(A,\mathcal{I})$ if and only if $x=x_1e_1+x_2e_2$ and the matrix of $L_x$ on the basis $\{ e_1,e_2\}$ is given by
$\begin{pmatrix}
	0&0	\\
	x_1&0	
\end{pmatrix}.$ Hence, the dimension of $Inn\mathcal{AD}er(A,V)$ is $2$ and
$$Inn\mathcal{AD}er(A)=\left\{\left(\begin{pmatrix}
	0&0	\\
	x_1&0	
\end{pmatrix},\begin{pmatrix}
-x_1\\
-x_2	
\end{pmatrix}\right), x_1, x_2\in\mathbb{R}\right\}.$$
	\end{enumerate}	
\end{example}	
\section*{Computations of $\mathcal{H}_{RB}^{0}(A,V)$ and $\mathcal{H}_{RB}^{1}(A,V)$}
An algebraic motivation is the interpretation of the low degree cohomology spaces in terms of
algebraic properties of the weighted Rota-Baxter Jacobi-Jordan algebra, which makes the cohomology spaces interesting invariant to compute.\\

Let $(A,\mathcal{I})$ be a $\lambda$-weighted Rota-Baxter Jacobi-Jordan algebra and $(V,\mathcal{T})$ be a representation of it. Let compute $\mathcal{H}_{RB}^{0}(A,V)$ and $\mathcal{H}_{RB}^{1}(A,V)$.
\begin{itemize}
\item $\mathcal{H}_{RB}^{0}(A,V)$\\
 An element $v\in V$ is in $\mathcal{Z}^{0}_{RB}(A,V)$ if and only if $d_{RB}(v,v)=(0,0)$ i.e $(d(v),-v)=(0,0)$. This holds only when $v=0$. Hence, $\mathcal{H}_{RB}^{0}(A,V)=\{0\}$.
 \item $\mathcal{H}_{RB}^{1}(A,V)$\\
 Let $(\eta,v)$ be an element of $(\mathcal{C}^{1}_{RB}(A,V)$. Then $(\eta,v)$ is in $\mathcal{Z}^{1}_{RB}(A,V)$ if and only if $d_{RB}(\eta,v)=(0,0)$ i.e $(d(\eta),-\tilde{d}(v)-\phi^{1}_{1}(\eta))=(0,0)$. So, we get the following identities:
 \begin{eqnarray}
 	&&\eta(x\ast y)=-\rho(x)(\eta(y))-\rho(y)(\eta(x)), \forall x, y \in A, \\
 &&	\eta(\mathcal{I}(x))-\mathcal{T}(\eta(x))=\mathcal{T}(\rho(x)v)-\rho(\mathcal{I}(x))v, \forall x, y \in A.	
 \end{eqnarray}
Then, from the Definition $\ref{De}$, we conclude that $(\eta,v)$ is an antiderivation of $(A,\mathcal{I})$ with values in $(V,\mathcal{T})$ i.e., 
$\mathcal{Z}^{1}_{RB}(A,V)=\mathcal{AD}er(A,V)$. Furthermore, we have $\mathcal{B}^{1}_{RB}(A,V)=\{\delta_{RB}^0(v,v):=(\delta^0(v),-v),v\in V\}=Inn\mathcal{AD}er(A,V)$. Then, 
\begin{equation}
\mathcal{H}^{1}_{RB}(A,V)=\mathcal{AD}er(A,V)/Inn\mathcal{AD}er(A,V).
\end{equation}
 For the particular case  when $V=A$ (the adjoint representation), we obtain
 \begin{equation}
 	\mathcal{H}^{1}_{RB}(A):=\mathcal{AD}er(A)/Inn\mathcal{AD}er(A).
 \end{equation}
\end{itemize}
\begin{example}
From Example \ref{cant}, we deduce that elements of the first cohomology space $\mathcal{H}^{1}_{RB}(A)$ of the $\lambda$-weighted Jacobi-Jordan algebra in Example \ref{F} are the classes of those of the set
$$\left\{\left(\begin{pmatrix}
	a_{11}&0	\\
	0&-2a_{11}	
\end{pmatrix}, \begin{pmatrix}
	\frac{3a_{11}b}{d}	\\
	\\
	0	
\end{pmatrix} \right), a_{11}\in\mathbb{R}\right\}.$$
\end{example}
Let consider again the following example.
 \begin{example}
 	Let $(A,\ast)$ be the three-dimensional Jacobi-Jordan algebra where nonzero products in a given basis $\{e_{1}, e_{2}, e_{3}\}$  are $e_{1}\ast e_{2}=e_{3}=e_{2}\ast e_{1}$ (see \cite{Momo6}).
 	The linear map $\mathcal{I}: A\rightarrow A$ given in the basis $\{e_{1}, e_{2}, e_{3}\}$  by 
 	\begin{eqnarray}
 		\mathcal{I}=		
 		\begin{pmatrix}
 			r_{11}&r_{12}&r_{13}\\
 			r_{21}&r_{22}&r_{23}\\
 			r_{31}&r_{32}&r_{33}	
 		\end{pmatrix}
 		, r_{ij}\in \mathbb{R} \mbox{ for $1\leq i,j\leq 3$,}\nonumber
 	\end{eqnarray}
 	is a $\lambda$-weighted Rota-Baxter operator on 
 	$(A,\ast)$ if and only if
 	\begin{equation}
 		\left\{\begin{array}{ll}
 			r_{13}=r_{23}=0\\
 			r_{11}r_{21}-r_{21}r_{33}=0\\	
 			r_{12}r_{22}-r_{12}r_{33}=0\\
 		(r_{22}+r_{11}+\lambda)r_{33}-	        r_{11}r_{22}-r_{12}r_{21}=0.\\
 		\end{array} \right.\nonumber
 	\end{equation}
 In particular for  $r_{12}\neq 0$ and $r_{21}\neq 0$, we must have 	$r_{12}=\frac{(\lambda+r_{11})r_{11}}{r_{12}}$
 with $r_{11}\neq 0$, $r_{11}\neq -\lambda$ and $r_{33}=r_{22}=r_{11}.$ Hence,
 \begin{eqnarray}
 	\mathcal{I}=		
 	\begin{pmatrix}
 		r_{11}&r_{12}&0\\
 		\frac{(\lambda+r_{11})r_{11}}{r_{12}}&r_{11}&0\\
 		r_{31}&r_{32}&r_{11}	
 	\end{pmatrix}
 	.\nonumber
 \end{eqnarray}
Let $\delta: A\rightarrow A$ be a linear map and $x\in A$. Then $(\delta,x)$ is an antiderivation of $(A,\mathcal{I})$ if and only if $x=\alpha e_1+\beta e_2+x_3 e_3$ where 
\begin{eqnarray}
\alpha&=&-d_{32}+\frac{3d_{11}r_{12}r_{31} }{r_{11}(r_{11}+\lambda)},  \nonumber\\	
\beta&=& -d_{31}+\frac{3d_{11}r_{32}   }{r_{12} }\nonumber
\end{eqnarray}
and the
matrix of $\delta$ on the basis $\{e_1, e_2, e_3\}$ is given by \begin{eqnarray}
	\delta=		
	\begin{pmatrix}
		d_{11}&0&0\\
		0&d_{11}&0\\
		d_{31}&d_{32}&-2d_{11}	
	\end{pmatrix}
	, \mbox{ $d_{ij}\in\mathbb{R}$ for $1\leq i,j\leq 3$.}\nonumber
\end{eqnarray}
In addition, for any $x\in A$, $(L_x,-x)$ where $L_x(y)=x\ast y,\ \forall y \in A$, is an (inner) antiderivation  of $(A,\mathcal{I})$ if and only if $x=x_1e_1+x_2e_2+x_3e_3$ and the matrix of $L_x$ on the basis $\{ e_1,e_2,e_3\}$ is given by
$\begin{pmatrix}
	0&0	&0\\
	0&0& 0\\
	x_2&x_1&0	
\end{pmatrix}$
 where $x_1,x_2,x_3\in\mathbb{R}.$\\
 Observe that the dimension of $\mathcal{AD}er(A)$ (respectively $Inn\mathcal{AD}er(A)$) is  $4$ (respectively $3$). Hence, the dimension of $\mathcal{H}^{1}_{RB}(A)$ is $1$ and its elements can be written as the classes of those of the set
 $$\left\{\left(\begin{pmatrix}
 	d_{11}&0&0\\
 	0&d_{11}&0\\
 	0&0&-2d_{11}	
 \end{pmatrix},\begin{pmatrix}
 \frac{3d_{11}r_{12}r_{31} }{r_{11}(r_{11}+\lambda)}\\
 \\
\frac{3d_{11}r_{32}   }{r_{12} } \\
\\
 0	
\end{pmatrix}\right),\,\ d_{11}\in\mathbb{R}\right\}.$$
 \end{example}
	
\end{document}